\documentclass[12pt]{article}
\usepackage{a4,amsmath,amsthm,amssymb,enumerate,eucal,graphicx,
  subfig,xcolor,array}
\title{\textbf{The hamiltonicity\\of essentially 9-connected line
    graphs}}%
\author{Tom\'{a}\v{s} Kaiser$^{\:1}$\\
  Petr Vr\'{a}na$^{\:1}$}
\date{}


\newtheorem{theorem}{Theorem}
\newtheorem{lemma}[theorem]{Lemma}
\newtheorem{proposition}[theorem]{Proposition}
\newtheorem{claim}{Claim}

\newtheorem{observation}[theorem]{Observation}

\newcommand\size[1] {\left|{#1}\right|}

\newcommand\Setx[1] {\left\{{#1}\right\}}

\newcommand{\PP}{\mathcal P}%
\newcommand{\RR}{\mathcal R}%
\newcommand{\SSS}{\mathcal S}%
\newcommand{\bd}[2]{\partial_{#1}(#2)}%
\newcommand{\mnbr}[1]{N_*(#1)}%
\newcommand{\Ge}{G^e}%
\newcommand{\He}{H^e}

\newcommand{\xcore}[1]{#1^\star}
\newcommand{\xhyper}[1]{#1^+}
\newcommand{\eps}{\varepsilon}

\newcommand{\fig}[1]{\includegraphics[page=#1]{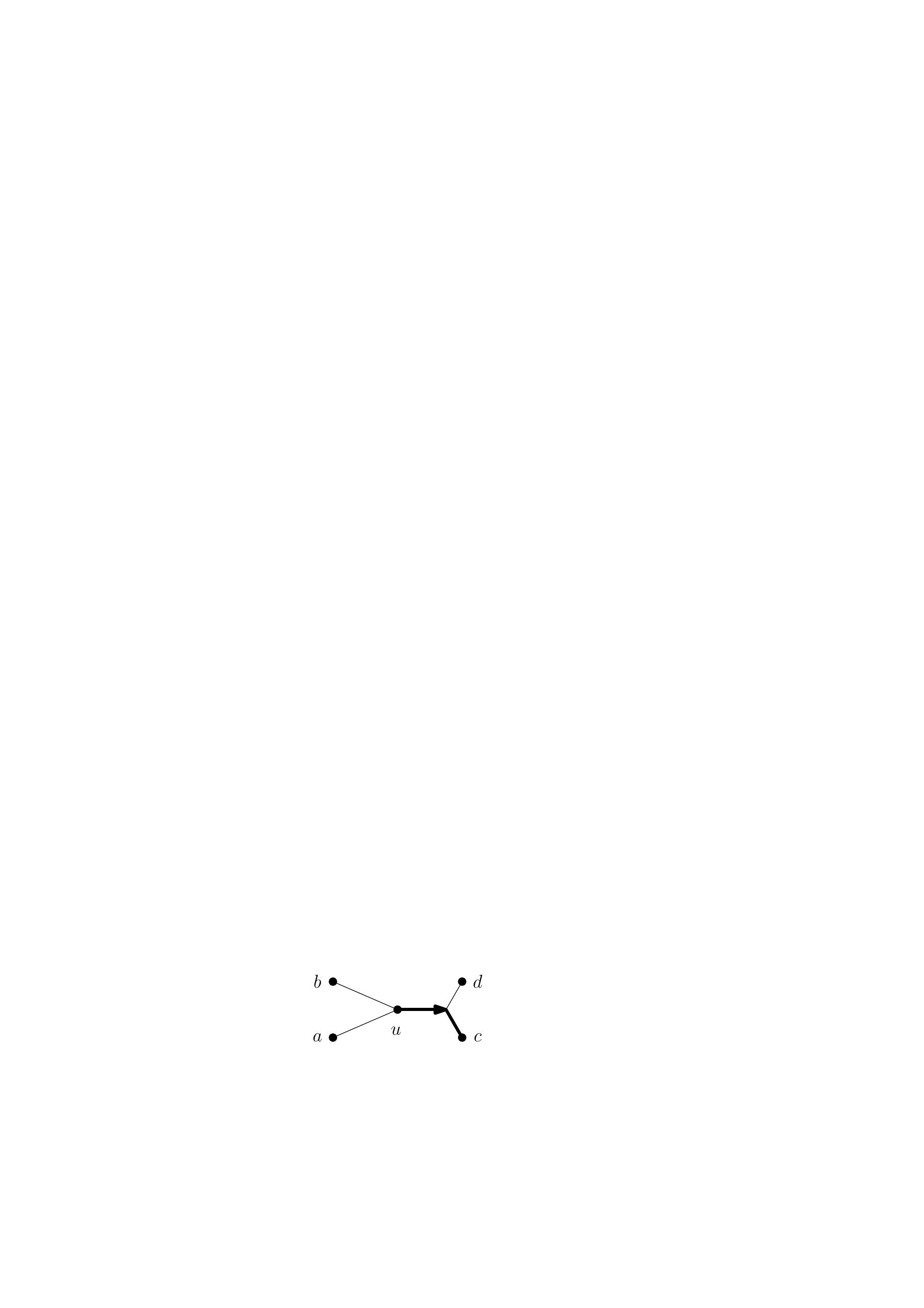}}%
\newcommand{\sfig}[2]{\subfloat[#2]{\fig{#1}}}%

\newcommand{\sfigtop}[2]{\newbox{\pic}\sbox{\pic}{\fig{#1}}%
  \subfloat[#2]{\vbox to\ht\base{\hbox to\wd\pic{\usebox\pic}}}}
\newcommand{\hf}{\hspace*{0pt}\hspace*{\fill}\hspace*{0pt}}

\newcommand{\claimproofend}{\hspace*{.1mm}\hspace{\fill}}
\newenvironment{claimproof}{}{\claimproofend\par\vspace{2mm}}

{\vspace{-1mm}\par\noindent\xcasehdr{#1}\upshape\vspace{2mm}\par\noindent}%
{\hspace*{0mm}\hspace{\fill}$\diamond$\par\vspace{2mm}}

{\vspace{-1mm}\par\noindent \xsubcasehdr{#1}\upshape
  \vspace{2mm}\par\noindent}%
{\hspace*{0mm}\par\vspace{2mm}}


\begin{document}
\maketitle
\footnotetext[1]{Department of Mathematics and European Centre of
  Excellence NTIS (New Technologies for the Information Society),
  University of West Bohemia, Univerzitn\'{\i}~8, 306~14~Plze\v{n},
  Czech Republic. Supported by project 17-04611S of the Czech Science
  Foundation. E-mail:
  \{\texttt{kaisert,vranap}\}\texttt{@kma.zcu.cz}.}

\begin{abstract}
  Yang et al. proved that every 3-connected, essentially 11-connected
  line graph is Hamilton-connected. This was extended by Li and Yang
  to 3-connected, essentially 10-connected graphs. Strengthening their
  result further, we prove that 3-connected, essentially 9-connected
  line graphs are Hamilton-connected. We use a method based on
  quasigraphs in combination with the discharging technique. The
  result extends to claw-free graphs.
\end{abstract}


\section{Introduction}
\label{sec:introduction}

A conjecture of Thomassen~\cite{Tho-reflections} states that every
4-connected line graph is hamiltonian. Motivated by this conjecture,
Lai et al.~\cite{LSWZ-every} studied the hamiltonicity of line graphs
in relation to their essential connectivity (the definition is
recalled later in this section). They proved the following result:

\begin{theorem}\label{t:lai}
  Every 3-connected, essentially 11-connected line graph is
  hamiltonian.
\end{theorem}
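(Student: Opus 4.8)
The plan is to work with the underlying graph and use the Harary--Nash-Williams correspondence between hamiltonicity of line graphs and dominating closed trails, and then to settle the resulting problem by Catlin's reduction method together with a discharging argument on the reduced graph. Recall that, for a graph $G$ with at least three edges, $L(G)$ is hamiltonian if and only if $G$ has a \emph{dominating closed trail} (DCT): a closed trail meeting at least one endvertex of every edge of $G$. So I would write the given line graph as $H=L(G)$ with $G$ connected (and not one of a few exceptional small graphs), and translate the hypotheses: $3$-connectivity of $H$ makes $G$ essentially $3$-edge-connected, while essential $11$-connectivity of $H$ says that no edge cut of $G$ with at most $10$ edges leaves two components each containing at least two edges. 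It then suffices to prove that every such $G$ has a DCT.

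First I would pass to the \emph{core} $G_0$ of $G$: delete all vertices of degree $1$ and suppress all vertices of degree $2$, keeping track of the multiplicities of the edges thus created. Following Catlin and Lai, this is arranged so that $G$ has a DCT if and only if $G_0$ has a spanning closed trail, while $\delta(G_0)\ge 3$, $G_0$ is $2$-edge-connected, and the essential edge-connectivity condition survives (up to a small additive loss). Next I would apply Catlin's reduction: contracting all maximal nontrivial collapsible subgraphs of $G_0$ yields the reduced graph $G_0'$, and $G_0$ has a spanning closed trail if and only if $G_0'$ does; in particular, if $G_0'$ is a single vertex --- i.e.\ $G_0$ is collapsible --- we are done. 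I would note that $G_0'$ is triangle-free, that an essential edge cut of $G_0'$ lifts to one of $G_0$ of the same size, so $G_0'$ inherits the connectivity hypothesis, and that any vertex of $G_0'$ of degree $d\le 10$ is a genuine vertex of $G_0$ of the same degree: a contracted collapsible block has at least two vertices, hence at least two edges, so an edge-boundary of size $d\le 10$ around it would be a forbidden cut, and no original vertex of degree $\ge 3$ can shrink to degree $\le 2$ without producing a digon that the reduction would itself contract. In particular $G_0'$ has no vertex of degree $2$.

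The heart of the argument is then a discharging argument on $G_0'$, assumed nontrivial --- hence with at least four vertices --- for contradiction. Two opposing facts collide. From Catlin's theory, a reduced graph on at least four vertices satisfies $F(G_0'):=2\,\size{V(G_0')}-\size{E(G_0')}-2\ge 2$, which in terms of the numbers $D_j$ of vertices of degree $j$ reads $2D_2+D_3\ge 8+\sum_{i\ge5}(i-4)D_i$; since $D_2=0$ this already forces at least eight vertices of degree $3$. From the connectivity side, no small vertex set inducing at least two edges can have edge-boundary at most $10$; applied around degree-$3$ vertices this forces their neighbours to have large degree and limits adjacencies among the degree-$3$ vertices, and applied around slightly larger sets it controls the vertices of degree between $4$ and $10$. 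Then I would run the standard rule: assign each vertex of degree $i$ a charge $i-4$ (total charge $-2F(G_0')-4\le -8$) and discharge it from the few high-degree hubs onto their many light neighbours. The two families of constraints turn out to be incompatible: the hubs cannot absorb the deficit that the reduced-graph inequality pushes onto them. Hence $G_0'$ is trivial, $G_0$ is collapsible, $G$ has a DCT, and $H=L(G)$ is hamiltonian.

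The step I expect to be the main obstacle is the last one. The constant $11$ is precisely what lets the charge balance close, so the count must be done sharply: one has to keep an exact inventory of the light vertices of $G_0'$, of their (necessarily high-degree) neighbours, and of the small cuts available, and one cannot afford any slack in the earlier translation and core-reduction steps. It is by organising this bookkeeping more efficiently --- through the language of quasigraphs in place of Catlin's reduction --- that one can hope to weaken the hypothesis from essential $11$-connectivity to essential $9$-connectivity, which is what this paper goes on to do.
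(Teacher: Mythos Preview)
The paper does not give its own proof of Theorem~\ref{t:lai}: the statement is quoted from Lai, Shao, Wu and Zhou~\cite{LSWZ-every} as motivation, and the paper's contribution is the strictly stronger Theorem~\ref{t:main}. So the only ``paper's proof'' to compare with is the proof of Theorem~\ref{t:main}, which of course implies Theorem~\ref{t:lai}.

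Your sketch is a faithful outline of the original Lai--Shao--Wu--Zhou argument: the Harary--Nash-Williams correspondence with dominating closed trails, passage to the core $G_0$, Catlin's reduction, and a charge-balance on the reduced graph $G_0'$ using the inequality $F(G_0')=2|V(G_0')|-|E(G_0')|-2\ge 2$ for nontrivial reduced graphs. The broad strategy is correct and well understood; a couple of the structural claims you make in passing (e.g.\ that every vertex of $G_0'$ of small degree is a genuine vertex of $G_0$, or that no degree-$2$ vertices survive) require a bit more care in the original paper than your one-line justifications suggest, but nothing essential is missing.

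The present paper's route to the stronger result is genuinely different. In place of Catlin's reduction it converts the core $G^0$ into a $3$-hypergraph $H^0$ by turning each unprotected degree-$3$ vertex into a $3$-hyperedge on its neighbours, and then applies the Skeletal Lemma (Theorem~\ref{t:skeletal}, proved in~\cite{KV-quasi}) to obtain an acyclic quasigraph $\sigma$ together with a $\sigma$-skeletal partition $\SSS$. The counting and discharging of Sections~\ref{sec:counting}--\ref{sec:discharging} take place in the quotient hypergraph $H^0/\SSS$, not in a Catlin-reduced graph, and the object ultimately produced is a spanning trail via Proposition~\ref{p:qt-join} rather than a collapsibility certificate. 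What this buys is exactly what you say in your last paragraph: the quasigraph framework tracks the interaction between the tree-like part and its complement more tightly, and the ``no bad leaves'' refinement of the Skeletal Lemma recovers enough slack to bring the constant from $11$ down to $9$.
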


In fact, Yang et al.~\cite{YLLG-collapsible} prove that under the
assumption of Theorem~\ref{t:lai}, the graph is
\emph{Hamilton-connected}, i.e., any pair of its vertices is joined by
a Hamilton path. The result was recently improved by Li and
Yang~\cite{LY-every} who showed that 3-connected, essentially
10-connected graphs are Hamilton-connected.

This partially answered a question of Lai et al.~\cite{LSWZ-every}
whether the constant in Theorem~\ref{t:lai} can be replaced by a
smaller one. They note that the least possible value is 4, which is
improved to 5 in \cite{YXLG-hamiltonicity}. We add that there are
3-connected, essentially 4-connected line graphs that do not even have
a Hamilton path, such as the line graph of a graph obtained by adding
one pendant edge to each vertex of the graph in
Figure~\ref{fig:example}a.

Regarding the assumption of 3-connectedness in Theorem~\ref{t:lai}, we
remark that there are 2-connected line graphs of arbitrary essential
connectivity that do not have any Hamilton path. A class of examples
can be constructed by replacing each edge of the complete graph on 4
vertices by an odd number of internally disjoint paths of length 3, as
shown in Figure~\ref{fig:example}b.

\begin{figure}
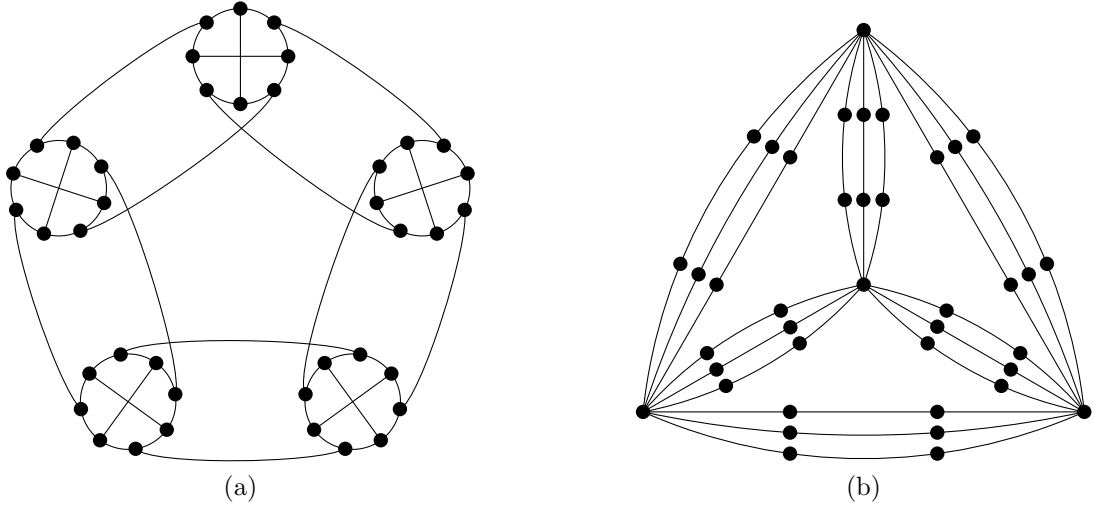

  \centering
  \sfig{21}{}\hf\sfig{18}{}
  \caption{(a) A graph used to construct a 3-connected, essentially
    4-edge-connected line graph without a Hamilton path. (b) A graph
    $G$ such that $L(G)$ is 2-connected and essentially $k$-connected
    (for $k=9$) and has no Hamilton path.}
\label{fig:example}
\end{figure}

The main result of the present paper is a further strengthening of
Theorem~\ref{t:lai} as follows:

\begin{theorem}\label{t:main}
  Every 3-connected, essentially 9-connected line graph is
  Hamilton-connected.
\end{theorem}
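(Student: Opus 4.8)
The plan is to translate Hamilton-connectedness of the line graph into the existence of certain trails in the underlying graph, to encode those trails as quasigraphs, and then to rule out a minimal counterexample by discharging. First I would dispose of the claw-free extension by a closure operation that reduces Hamilton-connectedness of a claw-free graph to that of a line graph while preserving the connectivity hypotheses (the analogue for Hamilton-connectedness of the Ryj\'{a}\v{c}ek closure), so that it suffices to prove the statement for $G=L(H)$. By the classical reformulation of hamiltonicity and Hamilton-connectedness of line graphs, $L(H)$ is Hamilton-connected if and only if for every pair of edges $e,f$ of $H$ there is a \emph{dominating $(e,f)$-trail}: a trail whose first and last edges are $e$ and $f$ and such that every edge of $H$ is incident with an internal vertex of the trail. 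The hypotheses translate into: $H$ is essentially $9$-edge-connected, i.e.\ every edge cut of $H$ of size at most $8$ isolates a single vertex. Passing to the \emph{core} of $H$ (deleting vertices of degree~$1$, suppressing vertices of degree~$2$, and tracking where $e$ and $f$ go), I may further assume minimum degree at least~$3$; the dominating-$(e,f)$-trail problem then becomes a spanning-trail-type problem in the core with two prescribed edges.

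The next step is to describe a candidate dominating trail by a combinatorial object --- a \emph{quasigraph} --- on the core $H$: each edge of $H$ is either traversed by a segment of the trail or ``covered'' by having an endpoint on the trail, subject to a connectivity condition, an acyclicity condition (a single closed trail, resp.\ a single $e$--$f$ path, and nothing else), and a covering condition. I would then reformulate the existence of such a quasigraph as a feasibility problem for an auxiliary packing/covering parameter, so that the \emph{non}-existence of a dominating $(e,f)$-trail yields a concrete obstruction --- a deficient family of vertex sets, or an uncoverable local configuration, necessarily supported near small edge cuts of $H$.

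For the main argument I would assume for contradiction that some pair $e,f$ admits no dominating $(e,f)$-trail and take $H$ minimal among such examples with respect to the reductions above; minimality then forbids a prescribed list of reducible configurations in $H$. Using the obstruction from the previous step, I would put charges on the vertices and edges of $H$ whose total is made positive by the obstruction, and then design discharging rules that move charge across $H$. Essential $9$-edge-connectivity is exactly what guarantees that every ``fragment'' joined to the rest of $H$ by at most~$8$ edges is a single vertex, so all problematic local behaviour is confined to neighbourhoods of vertices of degree between~$3$ and~$8$; showing that the rules force every vertex to end with nonpositive charge --- invoking minimality to rule out reducible configurations --- gives the contradiction.

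The main obstacle, and the place where the improvement from~$10$ to~$9$ has to be earned, is the interplay between the reducibility lemmas and the discharging rules: one needs a family of reducible configurations rich enough that a minimal counterexample avoids all of them, yet such that the remaining graphs --- essentially $9$-edge-connected with those configurations forbidden --- still admit a balanced discharging scheme. Verifying reducibility around vertices of degree~$3$, $4$, and~$5$, with all the ways the prescribed edges $e,f$ can sit inside those configurations, is where the bulk of the casework will lie; a secondary difficulty is making the quasigraph reformulation a genuine equivalence, so that the obstruction it produces is strong enough to drive the discharging.
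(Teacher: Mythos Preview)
Your outline has the right scaffolding --- closure, dominating $(e,f)$-trails, core, quasigraphs, discharging --- but the crucial middle step is missing, and as a result the discharging is aimed at the wrong object.

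First a translation error: essential $9$-connectivity of $L(H)$ is \emph{not} essential $9$-edge-connectivity of $H$; it is $2$-essential $9$-edge-connectivity (every edge cut of size at most $8$ leaves at most one side containing two or more edges). This matters, because the structural lemmas downstream use precisely this weaker hypothesis.

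The substantive gap is the passage from ``non-existence of a dominating $(e,f)$-trail'' to ``a concrete obstruction that drives discharging''. The paper does not work by minimal counterexample plus reducible configurations in the core. Instead it first converts the core into a $3$-\emph{hypergraph} $H^0$ by choosing a maximal independent set of unprotected degree-$3$ vertices and replacing each such vertex by a hyperedge on its three neighbours; quasigraphs live in this hypergraph, not in the core graph. The engine is then a black-box structural result (the Skeletal Lemma, proved in a companion paper): in any $3$-hypergraph there exists an acyclic quasigraph $\sigma$, free of a specified ``bad leaf'' defect, together with a $\sigma$-skeletal partition $\mathcal S$ of the vertex set. When $|\mathcal S|=1$ this quasigraph already yields the desired spanning trail; the whole proof therefore reduces to showing $|\mathcal S|$ is small. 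A counting argument on the quotient hypergraph $H^0/\mathcal S$ gives an inequality of the shape $\sum_{P\in\mathcal S}(d^0(P)-4)-m^0_3\le -2$, and the discharging is run on the \emph{vertices and hyperedges of $H^0/\mathcal S$}, with the $2$-essential $9$-edge-connectivity feeding structural lemmas about low-degree classes of $\mathcal S$ (e.g.\ a degree-$3$ class has only neighbours of degree $\ge 7$). This forces $|\mathcal S|\le 4$, and a further ad hoc analysis brings it down to $|\mathcal S|\le 2$, which is handled directly.

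Your plan to discharge on vertices and edges of the core itself, guided by a minimality/reducibility scheme, does not give access to the inequality above; and without the Skeletal Lemma (or an equivalent) you have no candidate partition to quotient by and no numerical obstruction to discharge against. The hypergraph reformulation and the Skeletal Lemma are not cosmetic --- they are where the quasigraph machinery produces something you can count.
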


Our approach uses a strengthening of the main result
of~\cite{KV-hamilton}, proved in a companion paper~\cite{KV-quasi}. In
particular, we use a reduction to hypergraphs, which is described in
Section~\ref{sec:hyper}. Preliminaries on the Hamilton connectivity of
line graphs (and the implications on the hypergraph side of the
problem) are given in Section~\ref{sec:hamilt-conn}.

Section~\ref{sec:quasi} describes quasigraphs, another crucial
component of the approach of~\cite{KV-hamilton,KV-quasi}, and states
the main technical tool of this paper, Theorem~\ref{t:skeletal} proved
in~\cite{KV-quasi}. A counting argument based on the outcome of
Theorem~\ref{t:skeletal} is given in Section~\ref{sec:counting}.

Section~\ref{sec:structural} is essentially a study of small
configurations in the hypergraph corresponding to a graph satisfying
the hypothesis of Theorem~\ref{t:main}. It prepares the way for the
application of a discharging argument in
Section~\ref{sec:discharging}. The final section is devoted to an
extension of the result to claw-free graphs and concluding remarks.

In the remainder of this section, we recall the necessary terminology
and notation. For graph-theoretical concepts not explained here, see,
for example, Diestel~\cite{Die-graph}. Unless otherwise noted, all
graphs in this paper are loopless and may contain parallel edges. When
speaking of a line graph of a graph $G$, it is understood that $G$ may
contain parallel edges (i.e., $G$ may be a multigraph), but $L(G)$ is
by definition a simple graph.

Let $G$ be a graph. We write $V(G)$ and $E(G)$ for the set of vertices
and the set of edges of $G$, respectively. For $i\geq 0$, $V_i(G)$ is
the set of vertices of degree $i$ in $G$.

A vertex-cut or edge-cut $X$ is called \emph{essential} if $G-X$ has
at least two components which are \emph{nontrivial} (i.e., contain
more than one vertex). The graph $G$ is \emph{essentially
  $k$-connected} if it has more than $k$ vertices and contains no
essential vertex-cut of size less than $k$. Similarly, $G$ is
\emph{essentially $k$-edge-connected} if it contains no essential
edge-cut of size less than $k$. It is not hard to see that $L(G)$ is
$k$-connected if and only if $G$ is essentially $k$-edge-connected and
$\size{E(G)} > k$.

We extend the above definitions as follows. An edge-cut $X$ in a graph
$G$ is \emph{$r$-essential} ($r\geq 0$) if there at least two
components of $G-C$, each of which contains at least $r$ edges. The
graph $G$ is \emph{$r$-essentially $k$-edge-connected} ($k\geq 1$) if
it has no $r$-essential edge-cuts of size less than $k$. Thus,
`$0$-essentially $k$-edge-connected' is the same as
`$k$-edge-connected', while `$1$-essentially $k$-edge-connected' is
the same as `essentially $k$-edge-connected'.

We note the following easy observation:
\begin{observation}\label{obs:2-ess}
  The line graph $L(G)$ of a graph $G$ is essentially $k$-connected if
  and only if $G$ is $2$-essentially $k$-edge-connected and
  $\size{E(G)} > k$.
\end{observation}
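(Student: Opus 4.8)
The plan is to exploit the elementary correspondence between vertex subsets of $L(G)$ and edge subsets of $G$. First I would observe that for any set $X\subseteq E(G)=V(L(G))$ we have $L(G)-X=L(G-X)$; that is, deleting a set of vertices from the line graph is the same as deleting the corresponding set of edges from $G$. Next I would record the (standard) fact that for any graph $H$ the components of $L(H)$ are in bijection with those components of $H$ that contain at least one edge, and that such a component of $L(H)$ has at least two vertices precisely when the corresponding component of $H$ contains at least two edges: a connected graph with $\geq 2$ edges has two adjacent edges and hence a non-trivial line graph, while a single-edge component of $H$ yields an isolated vertex of $L(H)$ and an edgeless component contributes nothing to $L(H)$ at all.

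Combining these two facts, $L(G)-X$ has at least two non-trivial components if and only if $G-X$ has at least two components each containing at least two edges; by definition this says exactly that $X$ is a $2$-essential edge-cut of $G$. Consequently, $L(G)$ has an essential vertex-cut of size less than $k$ if and only if $G$ has a $2$-essential edge-cut of size less than $k$; equivalently, $L(G)$ has no essential vertex-cut of size less than $k$ if and only if $G$ is $2$-essentially $k$-edge-connected. Since $\size{V(L(G))}=\size{E(G)}$, the requirement that $L(G)$ have more than $k$ vertices is the same as $\size{E(G)}>k$, and the stated equivalence then follows immediately from the definition of essential $k$-connectedness. (The argument is the natural analogue of the already-quoted characterisation of $k$-connectedness of $L(G)$, with ``essential'' on the line-graph side matched by ``$2$-essential'' on the graph side.)

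This is routine; the only points that need a little care are the bookkeeping of degenerate components of $G-X$ — isolated vertices, which disappear entirely in $L(G-X)$, and single-edge components, which become trivial one-vertex components of $L(G-X)$ — together with the remark that the presence of two non-trivial components already forces $G-X$ to be disconnected, so that $X$ is genuinely an edge-cut in the sense of the definition. I do not anticipate any real obstacle.
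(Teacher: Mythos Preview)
Your argument is correct. The paper does not actually give a proof of this observation --- it is stated as ``easy'' and left to the reader --- so there is nothing to compare against; your write-up is precisely the routine verification the authors had in mind, spelling out the bijection between essential vertex-cuts of $L(G)$ and $2$-essential edge-cuts of $G$ via $L(G)-X=L(G-X)$ and the component correspondence.
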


The \emph{length} of a path is the number of its edges. The degree of
a vertex $v$ in a graph $G$ is denoted by $d_G(v)$. Given a set of
vertices $X\subseteq V(G)$, we let $\bd G X$ denote the set of edges
of $G$ with exactly one endvertex in $X$. Furthermore, we extend the
notation for the degree of a vertex and set $d_G(X) = \size{\bd G X}$.

Besides graphs, we will also consider \emph{3-hypergraphs}, that is,
hypergraphs with all hyperedges of size 2 or 3 (called
\emph{2-hyperedges} or \emph{3-hyperedges} accordingly). For a
hypergraph $H$, $V(H)$ and $E(H)$ denote its vertex set and its
hyperedge set, respectively. The symbol $V_i(H)$ denotes the set of
vertices of degree $i$ ($i\geq 0$). In addition, for $i\in\Setx{2,3}$,
we let $E_i(H)$ denote the set of $i$-hyperedges of $H$. We define a
graph $G(H)$ whose vertex set is $V(H) \cup E_3(H)$, with vertices
$u,v$ joined by an edge if either $u,v$ are neighbours in $H$, or
$v\in E_3(H)$ and $u$ is a vertex contained in $v$.

For $X\subseteq V(H)$, we let $\bd H X$ be the set of hyperedges of
$H$ intersecting $X$ but not contained in it, and define $d_H(X) =
\size{\bd H X}$ as in the graph case.

If $e$ is a hyperedge of $H$ and $v$ is a vertex contained in $e$,
then the \emph{detachment} of $e$ from $v$ is the operation which
removes $e$ from $H$ and, in case $\size e=3$, replaces it with
$e-\Setx v$.


\section{Reduction to hypergraphs}
\label{sec:hyper}

From this point on, let $G$ be a graph whose line graph satisfies the
hypothesis of Theorem~\ref{t:main}. By Observation~\ref{obs:2-ess} and
the preceding discussion, $G$ is essentially 3-edge-connected and
2-essentially 9-edge-connected.

We begin by transforming $G$ to a graph $G^0$ called the \emph{core}
of $G$. Recall that the \emph{suppression} of a vertex $v$ of degree 2
is the contraction of one of its incident edges (discarding any loop
that may result). The graph $G^0$ is obtained from $G-V_1(G)$ by
suppressing all the vertices of degree 2. By our assumption that $G$
is essentially 3-edge-connected, $G-V_1(G)$ has minimum degree at
least 2 unless $G$ is a star. (Note also that that $V_2(G-V_1(G)) =
V_2(G)$.) Clearly, either $G^0$ is \emph{trivial} (i.e., it has only
one vertex), or $G^0$ has minimum degree at least 3. This observation
is strengthened in Lemma~\ref{l:g0} below.

Given a set $X\subset V(G^0)$, we define $\xcore X$ as the union of
$X$ with the set of vertices $y\in V(G)-V(G^0)$ such that $N_G(y)
\subseteq X$.

\begin{observation}
  \label{obs:cuts}
  For any $X\subseteq V(G^0)$, it holds that $d_{G^0}(X) = d_G(\xcore
  X)$. Furthermore, if $\bd{G^0} X$ is an $r$-essential edge-cut in
  $G^0$ ($r \geq 0$), then $\bd G {\xcore X}$ is an $r$-essential
  edge-cut in $G$.
\end{observation}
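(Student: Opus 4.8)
The plan is to prove the two assertions of Observation~\ref{obs:cuts} separately, relating edge-cuts of $G^0$ to edge-cuts of $G$ through the correspondence between $X$ and $\xcore X$. First, for the equality $d_{G^0}(X) = d_G(\xcore X)$, I would trace what happens to each edge of $G^0$ under the reverse of the coring operation. Recall that $G^0$ arises from $G - V_1(G)$ by suppressing degree-$2$ vertices; thus each edge $e$ of $G^0$ corresponds to a path $P_e$ in $G - V_1(G)$ whose internal vertices (if any) lie in $V_2(G)$ and hence outside $V(G^0)$. The key observation is that such an internal vertex $w$ has both its neighbours on $P_e$, so $N_G(w)$ is not contained in $X$ whenever $e \in \bd{G^0}{X}$ would require it — more precisely, $w$ joins $\xcore X$ only if \emph{both} endpoints of $P_e$ are in $X$, which is exactly the case $e \notin \bd{G^0}{X}$. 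So I would argue: the edges of $G$ leaving $\xcore X$ are precisely the first (or last) edges of the paths $P_e$ for $e \in \bd{G^0}{X}$, giving a bijection $\bd{G^0}{X} \to \bd G{\xcore X}$; degree-$1$ vertices $y$ added to $\xcore X$ contribute nothing new since their unique incident edge is internal to some $P_e$ with both endpoints in $X$ (as $N_G(y) \subseteq X$), hence again $e \notin \bd{G^0}{X}$. This yields $d_{G^0}(X) = d_G(\xcore X)$.

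For the second assertion, suppose $\bd{G^0}{X}$ is an $r$-essential edge-cut of $G^0$, so both $G^0[X]$ and $G^0[V(G^0) \setminus X]$ (more precisely the two sides of the cut) contain at least $r$ edges. Writing $Y = V(G^0) \setminus X$, note that $\xcore X$ and $\xcore Y$ together with the edge-interiors of the paths $P_e$ partition $V(G)$ and $E(G)$ appropriately: the components of $G - \bd G{\xcore X}$ consist of the part containing $\xcore X$ and the part containing $\xcore Y$, since every path $P_e$ lies wholly on one side unless $e \in \bd{G^0}{X}$, in which case $P_e$ has exactly one edge in the cut. Then I would check that the side of $G - \bd G{\xcore X}$ containing $\xcore X$ has at least as many edges as $G^0[X]$: indeed each edge of $G^0[X]$ contributes at least one edge of $G$ on that side (the whole path $P_e$ when both endpoints are in $X$, or at least the portion on the $X$-side when an endpoint is a suppressed vertex — but suppressed vertices are not in $X \subseteq V(G^0)$, so in fact $P_e$ is entirely inside). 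Hence both sides have at least $r$ edges and $\bd G{\xcore X}$ is $r$-essential in $G$.

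The main technical care — and the step I expect to require the most attention — is the bookkeeping around the suppression operation: making precise the correspondence $e \mapsto P_e$ between $E(G^0)$ and a set of internally-$V_2(G)$ paths in $G - V_1(G)$, and verifying that distinct edges of $G^0$ give internally disjoint paths whose union is all of $G - V_1(G)$. Once this structural dictionary is set up, both claims of the observation follow by routine counting; in particular one should note that the edge set of $G^0[X]$ injects into the edge set of $G[\xcore X]$ (each edge of $G^0$ inside $X$ ``expands'' to at least one edge of $G$ inside $\xcore X$), which is what makes the $r$-essential property transfer. A minor subtlety worth stating explicitly is the degenerate case where $G$ is a star, excluded implicitly since then $G^0$ is trivial and has no nonempty proper $X$.
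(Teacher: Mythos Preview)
The paper states Observation~\ref{obs:cuts} without proof, evidently regarding it as immediate from the definitions of $G^0$ and of $\xcore X$. Your argument supplies the omitted details, and the approach---setting up the correspondence between edges of $G^0$ and paths $P_e$ in $G-V_1(G)$, then tracing which edges of $G$ cross the cut---is exactly the right way to unpack the observation.

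One small slip: you write that the pendant edge incident to $y\in V_1(G)$ is ``internal to some $P_e$ with both endpoints in $X$''. This is not correct; pendant edges are deleted outright when passing to $G-V_1(G)$ and therefore lie on no $P_e$. The reason such an edge does not contribute to $\bd G{\xcore X}$ is simpler: the non-pendant endpoint of the edge lies in $V(G^0)$ (recall $V_1(G)\cup V_2(G)$ is independent), so the pendant edge has both ends in $\xcore X$ when $N_G(y)\subseteq X$ and both ends outside $\xcore X$ otherwise. With this correction your bijection $\bd{G^0}{X}\to\bd G{\xcore X}$ stands. Your transfer of $r$-essentiality is also fine, though you should phrase it in terms of components rather than ``sides'': a component $K$ of $G^0-\bd{G^0}{X}$ with at least $r$ edges lifts via $e\mapsto P_e$ to a connected subgraph of $G-\bd G{\xcore X}$ with at least as many edges, and distinct such components of $G^0-\bd{G^0}{X}$ stay in distinct components of $G-\bd G{\xcore X}$ since the only vertices added are pendants (attached to a single vertex of $K$) or midpoints of paths $P_e$ with both endpoints already in $K$.
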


\begin{lemma}
  \label{l:g0}
  If $L(G)$ is 3-connected and essentially 9-connected, then $G^0$ has
  the following properties:
  \begin{enumerate}[\quad(i)]
  \item $G^0$ is 3-edge-connected,
  \item $G^0$ is essentially 4-edge-connected,
  \item $G^0$ is 2-essentially 9-edge-connected.
  \end{enumerate}
\end{lemma}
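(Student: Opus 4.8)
The plan is to derive each of the three properties of $G^0$ from the corresponding hypothesis on $G$ (recorded above: $G$ is essentially $3$-edge-connected and $2$-essentially $9$-edge-connected) by transporting edge-cuts back and forth between $G^0$ and $G$ via Observation~\ref{obs:cuts}. The key point is that suppressing degree-$2$ vertices and deleting degree-$1$ vertices does not create small essential cuts: any edge-cut of $G^0$ corresponds, through the map $X\mapsto\xcore X$, to an edge-cut of $G$ of the same size and with at least as many nontrivial sides, so a forbidden cut in $G^0$ would pull back to a forbidden cut in $G$.

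For (i), suppose for contradiction $G^0$ has an edge-cut $\bd{G^0}X$ of size at most $2$; since $G^0$ has minimum degree $\geq 3$ (or is trivial, in which case there is nothing to prove), both $X$ and its complement are nonempty, so this is a genuine $2$-cut. By Observation~\ref{obs:cuts} it gives an edge-cut of the same size $\le 2$ in $G$; since $G$ is essentially (hence ordinarily, once we check triviality) $3$-edge-connected, this is impossible unless one side is trivial in $G$. But a trivial side $\xcore X$ of $G$ forces the corresponding side of $G^0$ to be a single vertex of degree $\le 2$, contradicting $\delta(G^0)\ge 3$. A small amount of care is needed because "essentially $3$-edge-connected" only forbids cuts with two nontrivial sides, so I first have to rule out the case of a pendant or degree-$2$ structure in $G^0$, which the construction of $G^0$ already does by fiat.

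For (ii) and (iii) the argument is the same in spirit but uses the $r$-essential refinement: if $\bd{G^0}X$ is an essential ($r=1$) edge-cut of size $\le 3$, resp.\ a $2$-essential ($r=2$) edge-cut of size $\le 8$, then by the second sentence of Observation~\ref{obs:cuts} the set $\xcore X$ yields an $r$-essential edge-cut in $G$ of the same size, contradicting $2$-essentially $9$-edge-connectedness in case (iii) directly, and in case (ii) I need the mild strengthening that $G$ (being essentially $3$-edge-connected and with $G^0$ nontrivial) is in fact essentially $4$-edge-connected — this is where I expect the only real work to lie. Concretely, a $3$-edge-cut of $G$ with two nontrivial sides, pulled through the core construction, would have to meet a vertex of degree $3$ in $G^0$ on one side in a way that isolates it after suppression; chasing this down shows either the cut was not essential or $G$ had a smaller essential cut than allowed. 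So the structure of the proof is: invoke $\delta(G^0)\ge 3$, then for each of (i)--(iii) assume a counterexample cut, apply Observation~\ref{obs:cuts}, and reach a contradiction with the hypotheses on $G$, with case (ii) requiring a short separate verification that the degree conditions upgrade essential $3$-edge-connectivity of $G$ to essential $4$-edge-connectivity once the core is nontrivial. The main obstacle is precisely this last upgrade; everything else is bookkeeping with Observation~\ref{obs:cuts}.
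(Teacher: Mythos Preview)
Your handling of (i) and (iii) matches the paper: (iii) is immediate from Observation~\ref{obs:cuts}, and (i) can be obtained by a similar transfer (the paper in fact just cites~\cite{Sha-claw} for it).

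The gap is in (ii). You propose to prove the ``mild strengthening'' that $G$ itself is essentially $4$-edge-connected, but this is false under the hypotheses. Neither essential $3$-edge-connectivity nor $2$-essential $9$-edge-connectivity of $G$ forbids an essential $3$-edge-cut one of whose sides contains exactly one edge---for instance, a pendant vertex attached to a vertex $w$ with $d_G(w)=4$ in an otherwise highly connected graph yields exactly such a cut (the three non-pendant edges at $w$), and $L(G)$ can still be $3$-connected and essentially $9$-connected. So the upgrade you are aiming for does not hold, and your sketch of chasing a \emph{general} essential $3$-cut of $G$ through the core construction cannot terminate in a contradiction.

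The paper's argument for (ii) avoids returning to $G$ altogether. Once (iii) is in hand, any essential edge-cut $F$ in $G^0$ with $\size F\le 3$ is automatically not $2$-essential (since $3<9$), so some component $K$ of $G^0-F$ contains exactly one edge. The two vertices of $K$ then have degree sum at most $2+3=5$ in $G^0$, so one of them has degree at most $2$, contradicting (i). In other words, (ii) is a corollary of (i) and (iii) proved entirely inside $G^0$; no statement about the essential $4$-edge-connectivity of $G$ is needed or available.
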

\begin{proof}
  Part (i) was proved by Shao~\cite{Sha-claw}
  (see~\cite[Lemma~2.2(i)]{LSWZ-every}). Part (iii) of the lemma
  follows from Observation~\ref{obs:cuts}. We prove part (ii). For
  contradiction, let $F$ be an essential edge-cut in $G^0$ of size at
  most 3. Let $K$ and $L$ be two components of $G^0-F$ containing at
  least one edge each. Since $F$ is not 2-essential, one of them (say,
  $K$) contains exactly one edge. The assumption that $\size F \leq 3$
  implies that one of the two vertices of $K$ has degree at most 2, a
  contradiction with part (i).
\end{proof}

The vertices of $G$ which are not vertices of $G^0$ are called
\emph{transient}. A vertex $v$ of $G^0$ with $d_{G^0}(v) = 3$ is said
to be \emph{protected} if it is adjacent in $G$ to a transient vertex.

We now turn $G^0$ into a 3-hypergraph $H^0$. Let $W_3$ be the set of
vertices of degree 3 in $G^0$, and let $W_3^\times$ be the subset of
$W_3$ consisting of the protected vertices. We choose a maximal
independent subset $W$ of $W_3-W_3^\times$. The 3-hypergraph $H^0$ is
constructed from $G^0-W$ by adding, for each $w\in W$, a hyperedge
$h(w)$ consisting of the neighbours of $w$ in $G^0$ provided that
there are at least two such neighbours; if there are exactly two, then
$\size{h(w)} = 2$. The procedure is illustrated in
Figure~\ref{fig:hyper}. In all the figures in this paper, a
3-hyperedge is represented by three lines meeting at a point which is
not marked as a vertex.

\begin{figure}
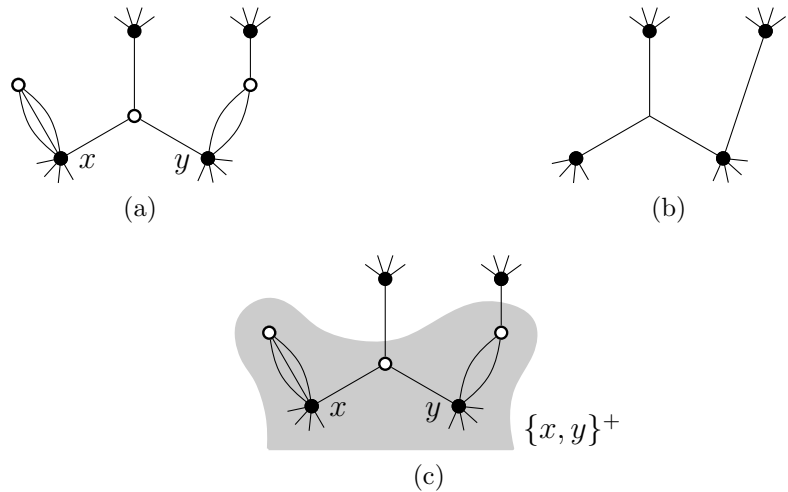

  \centering%
  \hf\sfig7{}\qquad\hf\qquad\sfig8{}\hf\\
  \qquad\sfig{16}{}
  \caption{The transformation of $G^0$ to $H^0$. (a) Part of the graph
    $G^0$ with vertices in $W$ filled white. (b) The corresponding
    part of $H^0$, where the three incident lines without a common
    vertex mark represent a 3-hyperedge. (c) The set $\Setx{x,y}^+$
    includes all of the vertices of $W$ shown in the picture.}
  \label{fig:hyper}
\end{figure}

The vertices in $W$ are called \emph{temporary}, the other vertices of
$G^0$ \emph{permanent}. Thus, the set of permanent vertices is the
vertex set of $H^0$. Note that all protected vertices are permanent.

\begin{lemma}\label{l:permanent}
  Every edge of $G$ has a permanent endvertex.
\end{lemma}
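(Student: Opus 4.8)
The plan is to show that $W$ contains no edge of $G$, i.e., that the two endvertices of any edge of $G$ cannot both be temporary. Recall that $W\subseteq W_3-W_3^\times$, so every vertex of $W$ has degree exactly $3$ in $G^0$ and is \emph{not} protected, meaning it has no transient neighbour in $G$. First I would observe that a temporary vertex $w$, having degree $3$ in $G^0$ and no transient neighbour, satisfies $d_G(w)=3$ as well (its incident edges in $G$ are exactly the three edges corresponding to those in $G^0$, none subdivided by transient vertices on the $w$-side).

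The heart of the argument is to rule out an edge of $G$ joining two vertices $u,v\in W$. Suppose such an edge $f=uv$ exists. Since $W$ is an independent set in $G^0$, the edge $f$ cannot be an edge of $G^0$; hence $f$ lies on a path of $G$ that was suppressed, i.e., $f$ is incident with a transient vertex. But a transient vertex adjacent to $u$ (or $v$) would make $u$ (or $v$) protected, contradicting $u,v\in W_3-W_3^\times$. The only remaining possibility is that $f$ joins $u$ and $v$ directly as an edge of $G$ that becomes an edge of $G^0$ — but then $uv\in E(G^0)$ with $u,v\in W$, contradicting the independence of $W$ in $G^0$. Hence no edge of $G$ has both endvertices temporary, which is exactly the claim that every edge of $G$ has a permanent endvertex.

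The step I expect to require the most care is the case analysis of how an edge $f=uv$ of $G$ with $u,v\in W$ relates to $G^0$: one must carefully track what happens to $f$ under the two operations producing $G^0$ from $G$, namely deletion of $V_1(G)$ and suppression of degree-$2$ vertices. An edge of $G$ either survives into $G^0$ (possibly as part of a suppressed path), gets deleted (if incident with a degree-$1$ vertex), or is contracted away during suppression. If $u,v\in W$ then neither is transient and neither has degree $1$, so $f$ is not deleted; and since $u,v$ have degree $3$ in $G^0$ they are not themselves suppressed. Thus $f$ either is an edge of $G^0$ — impossible by independence of $W$ — or lies interior to a suppressed path, which forces a transient endvertex on $f$ adjacent to $u$ or $v$ and hence forces $u$ or $v$ to be protected — again impossible. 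This exhausts the cases and completes the proof.
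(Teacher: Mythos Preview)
Your plan as stated is too narrow: showing that no edge of $G$ has both endvertices in $W$ is not the same as showing every edge has a permanent endvertex. The non-permanent vertices of $G$ are the temporary ones (those in $W$) \emph{together with} the transient ones (those in $V(G)\setminus V(G^0)$), so you must also exclude edges with both endvertices transient and edges with one transient and one temporary endvertex. You effectively cover the mixed case in passing --- your remark that vertices of $W$ have no transient neighbour forbids such an edge --- but you never invoke it as a case, and the transient--transient case is missing entirely. That case is what the paper handles first (``If $u$ is transient, then $v$ is protected and hence permanent''), relying on the fact that $V_1(G)\cup V_2(G-V_1(G))$ is independent because $G$ is essentially $3$-edge-connected, so a transient $u$ forces $v\in V(G^0)$ and hence (being adjacent to a transient vertex) $v$ is protected or of degree $\geq 4$, thus permanent.

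A minor point on your temporary--temporary argument: once $u,v\in W\subseteq V(G^0)$, the edge $uv\in E(G)$ is automatically an edge of $G^0$ (neither endpoint is deleted or suppressed), so the independence of $W$ in $G^0$ gives an immediate contradiction. Your case (b) about $f$ lying ``interior to a suppressed path'' cannot arise, since both endpoints of $f$ have degree $\geq 3$ in $G-V_1(G)$; there is no need to argue about a forced transient endvertex there.
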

\begin{proof}
  Assume that an edge $e$ of $G$ has endvertices $u$ and $v$, and that
  $u$ is not permanent. If $u$ is transient, then $v$ is protected and
  hence permanent. Otherwise, $u$ is temporary, in which case $v$ is
  neither transient (as this would make $u$ permanent) nor temporary
  (since temporary vertices form an independent set). Hence, $v$ is
  permanent.
\end{proof}

Similarly to the set $\xcore X$ above, we introduce a set $\xhyper Y
\subseteq V(G^0)$, where $Y\subseteq V(H^0)$. The definition is
illustrated in Figure~\ref{fig:hyper}c. The set $\xhyper Y$ is
defined as the union of $Y$ with the set of all the vertices $w\in W$
such that in $G^0$, $w$ is incident with at least two edges to $Y$
(possibly parallel). As in Observation~\ref{obs:cuts}, we have
$d_{H^0}(Y) = d_{G^0}(\xhyper Y)$. Since $G^0$ is 3-edge-connected, we
obtain the following observation:

\begin{observation}\label{obs:3ec}
  The hypergraph $H^0$ is 3-edge-connected.
\end{observation}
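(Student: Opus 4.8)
The plan is to transfer the edge-connectivity of $G^0$ (Observation~\ref{obs:3ec}'s prerequisite, namely part~(i) of Lemma~\ref{l:g0}) across the construction of $H^0$ using the set operator $Y\mapsto\xhyper Y$ introduced just above. Concretely, suppose for contradiction that $H^0$ has an edge-cut $F$ of size at most $2$, so that $V(H^0)$ splits as $Y\cup Z$ with $\bd{H^0}Y=F$ and both $Y,Z$ nonempty. The key identity already asserted in the text is $d_{H^0}(Y)=d_{G^0}(\xhyper Y)$, so $\bd{G^0}{\xhyper Y}$ is an edge-cut of $G^0$ of size at most $2$; since $G^0$ is $3$-edge-connected by Lemma~\ref{l:g0}(i), this forces $\xhyper Y$ to be either $\emptyset$ or all of $V(G^0)$, and the same for $\xhyper Z$.

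First I would verify that $\xhyper Y$ and $\xhyper Z$ together cover $V(G^0)$: every permanent vertex lies in exactly one of $Y,Z$ and hence in the corresponding $\xhyper{\cdot}$, while a temporary vertex $w\in W$ has all three of its $G^0$-neighbours among the permanent vertices, so at least two of them lie on the same side, say in $Y$, whence $w\in\xhyper Y$ by definition. Thus $\xhyper Y\cup\xhyper Z=V(G^0)$. Now if $\xhyper Y=\emptyset$ then $Y=\emptyset$, contradicting our choice; and if $\xhyper Y=V(G^0)$, then in particular $Z=\emptyset$ (as $Z\subseteq V(H^0)\subseteq V(G^0)=\xhyper Y$ would be consistent, but we need the complementary statement) — here I need to be slightly careful, so the cleaner route is: $\xhyper Z$ must also be $\emptyset$ or $V(G^0)$, and by the covering statement not both can be $\emptyset$; if both equal $V(G^0)$ then every $w\in W$ has at least two neighbours in $Y$ \emph{and} at least two in $Z$, impossible since $w$ has only three neighbours in $G^0$. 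Hence exactly one of them, say $\xhyper Z$, is $\emptyset$, forcing $Z=\emptyset$, the desired contradiction.

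The only genuine subtlety — and the step I would treat most carefully — is the interaction between temporary vertices and the $r=0$ case of the $\xhyper{\cdot}$ machinery: unlike $\xcore{\cdot}$ (which attaches a transient vertex only when \emph{all} its neighbours lie in $X$), the set $\xhyper Y$ absorbs a temporary $w$ as soon as $w$ sends two edges into $Y$, and one must make sure this asymmetry does not allow a temporary vertex to be "claimed" by both sides or by neither in a way that breaks either the covering argument or the degree identity $d_{H^0}(Y)=d_{G^0}(\xhyper Y)$. Since $w$ has degree exactly $3$ in $G^0$, the pigeonhole split $2{+}1$ handles this cleanly; the hyperedge $h(w)$ of $H^0$ (of size $2$ or $3$) contributes to $\bd{H^0}Y$ precisely when it meets both $Y$ and $Z$, which matches $w$ contributing to $\bd{G^0}{\xhyper Y}$. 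Everything else is bookkeeping, and the argument is short enough that I would fold it into the one-line observation as stated, perhaps expanding it to a couple of sentences for the reader's benefit.
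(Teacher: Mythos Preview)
Your proposal is correct and follows exactly the paper's route: the paper simply records the identity $d_{H^0}(Y) = d_{G^0}(\xhyper Y)$ and invokes the $3$-edge-connectedness of $G^0$ from Lemma~\ref{l:g0}(i), without further detail. Your case analysis can be shortened by observing that $\xhyper Y \cap V(H^0) = Y$ (only temporary vertices are ever added), so $\xhyper Y$ is automatically a proper nonempty subset of $V(G^0)$ whenever $Y$ is a proper nonempty subset of $V(H^0)$; this sidesteps the covering argument and the ``both equal $V(G^0)$'' sub-case entirely.
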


A more detailed study of the properties of $H^0$ is undertaken in
Section~\ref{sec:structural}. The results will be used in the design
of a discharging procedure in Section~\ref{sec:discharging}.

We add one more definition. To each edge $e$ of $G$, we want to assign
a hyperedge $k(e)$ of $H^0$ which `corresponds' to $e$ in
$H^0$. First, let us define a value $k_1(e)$ which is either an edge
of $G^0$, or the empty set:
\begin{equation*}
  k_1(e) =
  \begin{cases}
    \emptyset & \text{if $e$ is incident with $V_1(G)$,}\\
    e' & \text{if $e$ is incident with a transient vertex $z\notin V_1(G)$}\\
    & \text{\quad and $e'$ is obtained by suppressing $z$,}\\
    e & \text{otherwise.}
  \end{cases}
\end{equation*}
Observe that $k_1(e)$ is well-defined, because the assumption that $G$
is essentially 3-edge-connected implies that $V_1(G) \cup
V_2(G-V_1(G))$ is an independent set.

Next, we associate a hyperedge $k_2(f)$ of $H^0$ with each edge $f$ of
$G^0$. In the definition, we allow $f$ or $k_2(f)$ to be the empty
set.
\begin{equation*}
  k_2(f) =
  \begin{cases}
    \emptyset & \text{if $f = \emptyset$}, \\
    h(w) & \text{if $f$ is incident with a temporary vertex $w$,} \\
    f & \text{otherwise.}
  \end{cases}
\end{equation*}
Finally, for an edge $e$ of $G$, we define
\begin{equation*}
  k(e) = k_2(k_1(e)).
\end{equation*}


\section{Hamilton connectivity of line graphs}
\label{sec:hamilt-conn}

For the cases where the core $G^0$ is small, we will be able to verify
Theorem~\ref{t:main} directly using the following result
\cite[Lemma~3.3]{LLS-s-hamiltonian}:
\begin{lemma}\label{l:trees}
  If $L(G)$ is 3-connected and $G^0$ contains two edge-disjoint
  spanning trees, then $L(G)$ is Hamilton-connected.
\end{lemma}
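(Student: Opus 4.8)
The plan is to translate Hamilton-connectivity of $L(G)$ into the existence of suitable dominating trails of $G$, to push that question from $G$ down to its core $G^0$, and then to drive the construction with the two edge-disjoint spanning trees of $G^0$ via Catlin's reduction machinery.

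First I would dispose of the case that $G^0$ is trivial. Then $G$ has at most one vertex of degree $\ge 3$, and $3$-connectedness of $L(G)$ forbids any subdivided or pendant thread of length $\ge 2$ (its middle edge would be a vertex of degree $\le 2$ in $L(G)$), so $G$ is a (multi)star; hence $L(G)$ is a complete graph on $\ge 4$ vertices, which is Hamilton-connected. From now on $G^0$ is nontrivial, so it has minimum degree $\ge 3$ by Lemma~\ref{l:g0}(i).

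Next I would invoke the now-standard correspondence, going back to Harary and Nash-Williams and used in the form of \cite{LLS-s-hamiltonian}: for $|E(G)|\ge 3$, $L(G)$ has a Hamilton path between the vertices corresponding to edges $e_1,e_2$ exactly when $G$ has a dominating trail $T$ (meaning $G-V(T)$ has no edges) that is appropriately anchored at $e_1$ and $e_2$, with the usual special treatment of pendant edges. It then suffices to produce such a trail for every choice of $e_1,e_2$, and I would move this to $G^0$: suppressing degree-$2$ vertices and deleting degree-$1$ vertices is reversible at the level of trails, since a trail of $G^0$ lifts canonically to a trail of $G$ that covers every branch vertex and every suppressed vertex lying on its edges, the only uncovered vertices of $G$ being degree-$1$ vertices, whose incident edges are pendant and hence dominated. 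So a spanning trail of $G^0$ anchored at $k_1(e_1)$ and $k_1(e_2)$ lifts to the desired trail in $G$; the subcase where $e_1$ or $e_2$ is itself pendant is handled separately, by rerouting the end of the $L(G)$-path through the clique of edges at the relevant branch vertex, which $3$-connectedness of $L(G)$ permits.

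It remains to produce, for any two edges $f_1,f_2$ of $G^0$, a spanning trail of $G^0$ having $f_1$ and $f_2$ as its end edges, and this is where the hypothesis enters. By a theorem of Catlin, every graph with two edge-disjoint spanning trees is collapsible — equivalently, its Catlin reduction is a single vertex, since contracting connected subgraphs preserves the existence of two edge-disjoint spanning trees while a reduced graph with two edge-disjoint spanning trees is forced to be $K_1$ for edge-count reasons. A collapsible graph possesses, for every even $R\subseteq V(G^0)$, a spanning connected subgraph whose set of odd-degree vertices is exactly $R$; choosing $R$ according to the endpoints of $f_1$ and $f_2$ (or first subdividing $f_1$ and $f_2$, so that the end-edge requirement becomes a degree-one parity condition at the new vertices) yields a spanning connected subgraph which, traversed as an Euler trail between its two odd vertices, gives the required spanning trail after undoing the subdivision. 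I expect this last step to be the real obstacle: converting ``two edge-disjoint spanning trees'' into ``spanning trail with two prescribed end edges'' means juggling the parity constraints of a trail (only its two ends may have odd degree in its edge set) together with the degenerate side cases in which $f_1$ and $f_2$ share a vertex, are parallel, or are incident to a vertex of $G^0$ of degree $3$. By contrast, the passage between $L(G)$, $G$, and $G^0$ is essentially routine bookkeeping.
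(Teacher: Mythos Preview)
The paper does not give its own proof of this lemma: it is quoted verbatim as \cite[Lemma~3.3]{LLS-s-hamiltonian} and used as a black box. Your sketch is therefore not competing with anything in the present paper, but it does reproduce, in outline, the argument of the cited reference: translate Hamilton-connectedness of $L(G)$ into internally dominating $(e_1,e_2)$-trails in $G$ (this is Theorem~\ref{t:ham-conn-preimage} here), pass to the core $G^0$, observe via Catlin that a graph with two edge-disjoint spanning trees is collapsible, and then extract the spanning trail with prescribed terminal edges by the subdivision-and-parity trick. You have correctly located the only nonroutine point: after subdividing $f_1$ and $f_2$ the two edge-disjoint spanning trees are lost, so one must argue separately that the subdivided graph is still collapsible. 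In \cite{LLS-s-hamiltonian} this is handled through Catlin's invariant $F(\cdot)$ (the subdivision raises $F$ by at most~$2$) together with the edge-connectivity of $G^0$ furnished by the $3$-connectedness of $L(G)$; your sketch stops just short of this, but flags it as the obstacle, which is accurate. The side cases you mention (pendant $e_i$, coincident endpoints) are genuine but routine, and are absorbed in \cite{LLS-s-hamiltonian} by working directly with internal dominating trails rather than rerouting in $L(G)$.
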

Lemma~\ref{l:trees} will be used in conjunction with the
characterization of graphs with two disjoint spanning trees, which
follows from a more general result of Tutte~\cite{Tut-problem} and
Nash-Williams~\cite{NW-edge}:
\begin{theorem}[Tutte and Nash-Williams]\label{t:tnw}
  The graph $G^0$ has two edge-disjoint spanning trees if and only if
  for every partition $\PP$ of $V(G^0)$, the number of edges of
  $V(G^0)$ with endvertices in different classes of $\PP$ is at least
  $2(\size\PP - 1)$.
\end{theorem}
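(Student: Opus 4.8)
The two directions are quite different in difficulty. The forward implication (necessity of the partition condition) is routine: if $T_1,T_2$ are edge-disjoint spanning trees of $G^0$ and $\PP$ is any partition of $V(G^0)$, then contracting each class of $\PP$ to a point turns each $T_i$ into a connected graph on $\size\PP$ vertices, which (after discarding loops) has at least $\size\PP-1$ edges; these arise from at least $\size\PP-1$ distinct cross-edges of $T_i$, and since $T_1$ and $T_2$ are edge-disjoint the number of cross-edges of $\PP$ in $G^0$ is at least $2(\size\PP-1)$.

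The substance is the converse, and the plan is to derive it from the matroid union theorem of Nash-Williams and Edmonds. Let $M=M(G^0)$ be the cycle matroid of $G^0$, with rank function $r$; thus $r(F)=\size{V(G^0)}-c(F)$, where $c(F)$ is the number of components of the spanning subgraph $(V(G^0),F)$. A pair of edge-disjoint spanning trees of $G^0$ is exactly a pair of disjoint bases of $M$; since every base of $M$ has $\size{V(G^0)}-1$ elements and an independent set of the union matroid $M\vee M$ of size $2(\size{V(G^0)}-1)$ necessarily splits into two disjoint independent sets of $M$, each of the maximum size $\size{V(G^0)}-1$, i.e.\ two spanning trees, the existence of two edge-disjoint spanning trees in $G^0$ is equivalent to $r_{M\vee M}(E(G^0))\ge 2(\size{V(G^0)}-1)$. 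The matroid union theorem evaluates this rank as
\[
  r_{M\vee M}\bigl(E(G^0)\bigr)=\min_{F\subseteq E(G^0)}\bigl(\size{E(G^0)\setminus F}+2\,r(F)\bigr),
\]
so it suffices to show that the partition condition is equivalent to $\size{E(G^0)\setminus F}+2\,r(F)\ge 2(\size{V(G^0)}-1)$ for all $F\subseteq E(G^0)$.

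The equivalence of the two formulations is a matter of translating between subsets of $E(G^0)$ and partitions of $V(G^0)$. For one direction, given $F$ we may assume $F$ is closed in $M$ (passing to the closure leaves $r(F)$ unchanged and cannot enlarge $E(G^0)\setminus F$); then $F$ is precisely the set of edges lying inside the components of $(V(G^0),F)$, so letting $\PP$ be the partition into these components we get $r(F)=\size{V(G^0)}-\size\PP$ and $E(G^0)\setminus F$ equal to the set of cross-edges of $\PP$, whereupon the inequality for $F$ becomes exactly the hypothesised bound for $\PP$. Conversely, given a partition $\PP$, take $F$ to be the set of non-cross-edges of $\PP$; every component of $(V(G^0),F)$ lies in one class of $\PP$, so $c(F)\ge\size\PP$, hence $r(F)\le\size{V(G^0)}-\size\PP$, while $E(G^0)\setminus F$ is the set of cross-edges of $\PP$, and the inequality for this $F$ yields the bound for $\PP$.

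The only genuine obstacle is the matroid union theorem itself; if a self-contained argument is wanted, it can be replaced by the standard augmenting-path construction specialised to two copies of the cycle matroid: maintain two edge-disjoint forests, and as long as one of them is not spanning, either find an alternating sequence of edge swaps between the two forests that increases the total number of edges used, or, when no such augmentation exists, extract from the blocking structure a partition $\PP$ of $V(G^0)$ with fewer than $2(\size\PP-1)$ cross-edges, contradicting the hypothesis. Identifying that blocking partition correctly is the one place in such an argument that requires care.
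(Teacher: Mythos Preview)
Your argument is correct: the necessity is trivial, and deriving sufficiency from the matroid union formula for the rank of $M\vee M$ (with the translation between flats of the cycle matroid and vertex partitions) is a standard and valid route. Note, however, that the paper does not prove this theorem at all --- it is quoted as a classical result of Tutte and Nash-Williams and used as a black box in Lemma~\ref{l:small}, so there is no ``paper's own proof'' to compare against.
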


Using Lemma~\ref{l:trees} and Theorem~\ref{t:tnw}, we obtain the
following:
\begin{lemma}\label{l:small}
  If $G^0$ has at most 5 vertices, then $L(G)$ is Hamilton-connected.
\end{lemma}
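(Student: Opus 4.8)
The plan is to reduce the statement to a property of the core $G^0$ alone. By Lemma~\ref{l:trees}, it is enough to show that $G^0$ contains two edge-disjoint spanning trees, and for that I would invoke the Tutte--Nash-Williams characterization, Theorem~\ref{t:tnw}: one must check that for every partition $\PP$ of $V(G^0)$ into $p\ge 1$ classes, the number of edges of $G^0$ joining two different classes is at least $2(p-1)$.

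To verify this I would use a short degree count. For $p=1$ the inequality is vacuous, and for $p=2$ the crossing edges form an edge-cut, which has size at least $3\ge 2$ by Lemma~\ref{l:g0}(i). For larger $p$, each class $X\in\PP$ satisfies $\emptyset\ne X\ne V(G^0)$, so $\bd{G^0}X$ is an edge-cut and hence $d_{G^0}(X)\ge 3$; summing over the $p$ classes counts every crossing edge exactly twice, so the number of crossing edges is at least $\lceil 3p/2\rceil$. Since $\size{V(G^0)}\le 5$, only the values $p\in\Setx{3,4,5}$ remain, for which $\lceil 3p/2\rceil$ equals $5,6,8$ while $2(p-1)$ equals $4,6,8$; the required inequality thus holds (with equality when $p=4$ or $p=5$). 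This verifies the Tutte--Nash-Williams condition, so $G^0$ has two edge-disjoint spanning trees, and Lemma~\ref{l:trees} then yields the conclusion.

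The one point needing a little care is the degenerate possibility that $G^0$ is trivial --- which, as remarked after Lemma~\ref{l:g0}, happens exactly when $G$ is a star. In that case the partition argument is empty, but $G^0$ still (vacuously) has two edge-disjoint spanning trees, so nothing changes; alternatively one may simply observe that $L(G)$ is then a complete graph. I do not expect a genuine obstacle here: the only inputs are the 3-edge-connectivity of $G^0$ (equivalently, minimum degree at least $3$ in the nontrivial case) and the bound $\size{V(G^0)}\le 5$, and although the counting is tight for $p=4$ and $p=5$, it still goes through.
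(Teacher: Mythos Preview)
Your proof is correct and follows essentially the same approach as the paper: both reduce via Lemma~\ref{l:trees} to showing $G^0$ has two edge-disjoint spanning trees, verify the Tutte--Nash-Williams condition of Theorem~\ref{t:tnw} using the bound $\size F \ge \lceil 3\size\PP/2\rceil$ coming from the 3-edge-connectivity of $G^0$ (Lemma~\ref{l:g0}(i)), and check that this suffices for $\size\PP\le 5$. Your version is merely more explicit in separating the small cases and in addressing the trivial core; the one harmless slip is the parenthetical ``equivalently, minimum degree at least $3$'', which is not an equivalence, but your argument correctly uses edge-cuts and does not rely on this remark.
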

\begin{proof}
  We use Theorem~\ref{t:tnw} to show that $G^0$ admits two
  edge-disjoint spanning trees. Let $\PP$ be a partition of
  $V(G^0)$ and let $F$ be the set of edges with endvertices in
  different classes of $\PP$. By Lemma~\ref{l:g0}(i), $G^0$ is
  3-edge-connected and thus $\size F \geq 3\size\PP/2$. Since
  $\size\PP\leq 5$, we have $\lceil 3\size\PP/2\rceil \geq
  2(\size\PP-1)$. The lemma follows from Theorem~\ref{t:tnw} and
  Lemma~\ref{l:trees}.
\end{proof}
For the purposes of this paper, Lemma~\ref{l:small} enables us to
restrict ourselves to the case that $G^0$ has at least 6 vertices.

We recall a well-known necessary and sufficient condition for the line
graph $L(G)$ to be Hamilton-connected. Let $e_1,e_2$ be edges of
$G$. A trail $T$ in $G$ is an \emph{$(e_1,e_2)$-trail} if its first
edge is $e_1$ and its last edge is $e_2$. The trail $T$ is
\emph{internally dominating} if every edge of $G$ is incident with an
internal vertex of $T$. Similarly, $T$ is \emph{internally spanning}
if every vertex of $G$ appears as an internal vertex of $T$. The
following is a folklore analogue of Harary and Nash-Williams'
characterization of hamiltonian line graphs
(cf.~\cite[Theorem~1.5]{LLZ-eulerian}).

\begin{theorem}\label{t:ham-conn-preimage}
  Let $G$ be a graph with at least 3 edges. Then $L(G)$ is
  Hamilton-connected if and only if for every pair of edges $e_1,e_2\in
  E(G)$, $G$ has an internally dominating $(e_1,e_2)$-trail.
\end{theorem}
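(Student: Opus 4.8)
The plan is to establish both directions via the standard dictionary between trails of $G$ and paths of $L(G)$. Write $v_f$ for the vertex of $L(G)$ representing an edge $f$ of $G$; then a walk $v_{f_1}v_{f_2}\cdots v_{f_m}$ is a path of $L(G)$ exactly when the $f_i$ are pairwise distinct and consecutive ones share an endvertex in $G$.

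For the implication from right to left, let $e_1\ne e_2$ be edges of $G$ and let $T=u_0,g_1,u_1,\dots,g_\ell,u_\ell$ be an internally dominating $(e_1,e_2)$-trail, so that $g_1=e_1$, $g_\ell=e_2$, $\ell\ge 2$ (as $e_1\ne e_2$), and every edge of $G$ is incident with an internal vertex $u_s$ with $1\le s\le\ell-1$. Reading the edges $g_1,\dots,g_\ell$ in order yields a path from $v_{e_1}$ to $v_{e_2}$ in $L(G)$, and it remains only to insert $v_g$ for each edge $g$ of $G$ not lying on $T$. Such a $g$ is incident with some internal vertex $u_s$, which is simultaneously a common endvertex of $g_s$ and $g_{s+1}$; hence $v_g$ is adjacent in $L(G)$ to both $v_{g_s}$ and $v_{g_{s+1}}$ and can be inserted between them. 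Edges of $G$ meeting at a single vertex induce a clique in $L(G)$, so a whole batch of such $v_g$ attached at one $u_s$ can be inserted consecutively. The result is a Hamilton path of $L(G)$ from $v_{e_1}$ to $v_{e_2}$.

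For the converse, fix $e_1\ne e_2$ and a Hamilton path $v_{f_1}\cdots v_{f_m}$ of $L(G)$ with $f_1=e_1$ and $f_m=e_2$, where $m=\size{E(G)}\ge 3$. For $1\le i\le m-1$ pick a common endvertex $x_i$ of $f_i$ and $f_{i+1}$, let $x_0$ be the endvertex of $f_1$ other than $x_1$, and let $x_m$ be the endvertex of $f_m$ other than $x_{m-1}$. The sequence $x_0,f_1,x_1,f_2,\dots,f_m,x_m$ would be an $(e_1,e_2)$-walk except that $x_{i-1}=x_i$ may hold for some $i$ with $2\le i\le m-1$ (it cannot for $i=1$ or $i=m$, by the choice of $x_0$ and $x_m$). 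Deleting each such $f_i$ and amalgamating the walk at the repeated vertex yields an honest trail $T$ from $x_0$ to $x_m$ whose first and last edges are still $f_1=e_1$ and $f_m=e_2$, since the indices $1$ and $m$ are never deleted. It remains to check that $T$ is internally dominating. Every edge of $G$ is some $f_j$. If $f_j$ survives in $T$, then, since $\ell\ge 2$, at least one endvertex of $f_j$ is an internal vertex of $T$. If $f_j$ is deleted, it is incident with the vertex $v:=x_{j-1}=x_j$ at which it was amalgamated out; as $f_1$ and $f_m$ survive, the maximal block of deleted edges around $f_j$ is bounded on each side by a surviving edge having $v$ as an endvertex, so $v$ is visited by $T$ at a position strictly between its two ends, hence is internal.

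The one genuine obstacle is the ``stuck vertex'' phenomenon in the converse: a Hamilton path of $L(G)$ does not in general translate edge for edge into a walk of $G$, because three consecutive edges $f_{i-1},f_i,f_{i+1}$ may all pass through a single vertex of $G$. Amalgamation is exactly what repairs this, and it costs nothing: a deleted edge stays incident with the vertex at which it was removed, so it is still dominated internally. One should also glance at the degenerate configurations --- $\size{E(G)}=3$, and the case of trails with only two edges, in which all edges of $G$ meet a common vertex and $L(G)$ is complete --- to confirm that ``internal vertex'' is not vacuous there; this causes no difficulty, since $e_1\ne e_2$ forces the trail we build to contain at least two edges.
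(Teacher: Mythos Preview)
The paper does not actually prove Theorem~\ref{t:ham-conn-preimage}: it is recorded as a folklore analogue of the Harary--Nash-Williams characterisation of hamiltonian line graphs, with a pointer to~\cite{LLZ-eulerian}, and no argument is given. So there is no in-paper proof to compare against.

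Your argument is correct and is the classical one. In the direction from trails to Hamilton paths, the insertion of off-trail edges at internal vertices works because all edges through a fixed vertex form a clique in $L(G)$; you should (and implicitly do) pick exactly one internal occurrence for each such edge. In the converse direction, the amalgamation step where $x_{i-1}=x_i$ is precisely the standard repair, and your observation that $f_1$ and $f_m$ can never be deleted---forced by the choice of $x_0$ and $x_m$ as the \emph{other} endvertices---is exactly what guarantees that the resulting trail is still an $(e_1,e_2)$-trail of length at least~$2$ and that every deleted edge is dominated at an interior position of~$T$.
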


We will infer the existence of internally dominating trails in $G$
using hypergraphs obtained by a small modification of $H^0$ (the
3-hypergraph associated with $G$ as in
Section~\ref{sec:hyper}). First, we define a 3-hypergraph
$\He$ and a pair of vertices $a_1,a_2$ of $\He$. The
edges $e_i$ and the vertices $a_i$ ($i=1,2$) will be considered fixed
throughout the paper.

For $i=1,2$, let $a_i$ be a permanent vertex of $e_i$ (which exists by
Lemma~\ref{l:permanent}). If possible, we choose $a_1$ and $a_2$ so as
to be distinct. The hypergraph $\He$ is obtained from $H^0$ by
detaching $k(e_1)$ from $a_1$ and, subsequently, detaching $k(e_2)$
from $a_2$. We also define $\Ge$ as the graph $G(\He)$ corresponding
to $\He$. Note that every permanent vertex of $G$ is a vertex of
$\Ge$.

We say that a trail $T$ in a graph $G'$ \emph{spans} a set $X\subseteq
V(G')$ if $X\subseteq V(T)$. If the first and last vertices of $T$ are
$a$ and $b$, respectively, we say that $T$ is an
\emph{$ab$-trail}. The case $a=b$ is allowed in this definition.

The following lemma provides a bridge between spanning $a_1a_2$-trails
in $G(\He)$ and internally dominating $(e_1,e_2)$-trails in $G$:

\begin{lemma}\label{l:join}
  If $\Ge$ admits an $a_1a_2$-trail spanning $V(\He)$, then $G$
  contains an internally dominating $(e_1,e_2)$-trail.
\end{lemma}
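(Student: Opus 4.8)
The plan is to take the given $a_1a_2$-trail $T$ in $\Ge = G(\He)$ and ``unfold'' it back through the two reductions ($G \to G^0 \to H^0$ and the detachments producing $\He$) to obtain the desired $(e_1,e_2)$-trail in $G$. First I would recall that $V(\He) = V(H^0)$ consists exactly of the permanent vertices of $G^0$, and that $V(G(\He))$ additionally contains a vertex for each 3-hyperedge of $\He$. A trail in $G(\He)$ that passes through such a hyperedge-vertex $v_h$ corresponds, in the hypergraph, to a trail that ``uses'' the hyperedge $h$ by entering at one of its vertices and leaving at another (the third vertex of $h$ may or may not be visited). So the first step is to translate $T$ into a hypergraph trail $T'$ in $\He$ spanning $V(\He)$, where by a trail in a 3-hypergraph I mean an alternating sequence of vertices and hyperedges with the usual incidence and no-repetition conditions.

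Next I would lift $T'$ through the detachments. Recall $\He$ was obtained from $H^0$ by detaching $k(e_1)$ from $a_1$ and then $k(e_2)$ from $a_2$. Detaching a hyperedge from a vertex either deletes it (if it was a 2-hyperedge) or shrinks it; reversing this, a trail in $\He$ extends to a trail in $H^0$ by, at its endpoints $a_1$ and $a_2$, appending the (possibly shrunk) hyperedges $k(e_1)$ and $k(e_2)$ as new first and last hyperedges. The point of choosing $a_1, a_2$ to be permanent endvertices of $e_1, e_2$, and distinct when possible, is precisely to make this extension legal and to ensure the resulting trail in $H^0$ starts with $k(e_1)$ and ends with $k(e_2)$; the case $a_1 = a_2$ (forced when $e_1, e_2$ share their only permanent endvertex) needs a short separate check that the closed trail can still be opened up at the two copies of the hyperedges. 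This yields a closed-or-open trail $T''$ in $H^0$ from (an endvertex of) $k(e_1)$ to (an endvertex of) $k(e_2)$ spanning $V(H^0)$.

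Then I would expand $H^0$ back to $G^0$ and $G$. Each 3-hyperedge $h(w)$ of $H^0$ is a ``claw'' around a temporary vertex $w$ of $G^0$; a trail using $h(w)$ (entering at one neighbour of $w$, leaving at another) lifts to a trail in $G^0$ passing through $w$ via the two corresponding edges. A 2-hyperedge $h(w)$ lifts to the path of length 2 through $w$. The temporary vertices not visited by $T''$ via their hyperedge are harmless because every edge of $G^0$ has a permanent endvertex (this is essentially the content of Lemma~\ref{l:permanent} at the $G^0$-level), so internal domination is controlled by the permanent vertices, which $T''$ spans. Passing from $G^0$ back to $G$ means reinstating the suppressed degree-2 vertices (transient vertices not in $V_1(G)$) along the edges of the trail, and noting that the trail, now starting with $k_1(e_1)$-type edge and ending with $k_1(e_2)$-type edge, can be prepended/appended with $e_1$ and $e_2$ themselves if these were suppressed or replaced; here the definition of $k(e)$ via $k_1$ and $k_2$ does exactly the bookkeeping needed. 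Every edge of $G$ is either incident with $V_1(G)$ (hence dominated by any vertex of $G^0$ on the trail adjacent to it), or incident with a transient non-leaf vertex (dominated once we reinsert that vertex, or via its permanent neighbour), or has a permanent endvertex which lies internally on the trail. Assembling these observations shows the resulting $(e_1,e_2)$-trail in $G$ is internally dominating.

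The main obstacle, I expect, is the careful handling of the \emph{endpoints} and the \emph{degenerate cases}: ensuring that $e_1$ and $e_2$ end up as the genuine first and last edges of the $G$-trail (not merely incident with an internal vertex), and that when $a_1 = a_2$, or when $e_1$ and $e_2$ are incident with $V_1(G)$ or with transient vertices, the lifting still produces a bona fide trail rather than an edge-sequence with a repeated edge or a wrong endpoint. The topological ``unfolding'' of hyperedges to claws, and the reinsertion of suppressed vertices, are routine once the endpoint conventions are pinned down; the protected-vertex machinery is exactly what guarantees $a_1, a_2$ can be chosen permanently on $e_1, e_2$, so that part should go through cleanly.
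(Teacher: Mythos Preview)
Your proposal is correct and follows the same essential idea as the paper: lift the trail back to $G$, attach $e_1$ and $e_2$ at the permanent endvertices $a_1, a_2$, and conclude internal domination from Lemma~\ref{l:permanent}. The paper's proof is considerably more direct, however: it lifts the $\Ge$-trail straight to $G$ in one step (replacing each edge arising from a suppressed vertex $w$ by the two $G$-edges at $w$), and then simply prepends $e_1$ and appends $e_2$ in $G$ itself. Your detour through an intermediate hypergraph trail in $H^0$, with $k(e_1)$ and $k(e_2)$ appended at that level, is what manufactures the repeated-hyperedge and endpoint worries you flag; those complications evaporate once you work directly in $G$, since the detachments defining $\He$ guarantee that the lifted $G$-trail never uses $e_1$ or $e_2$ and the prepend/append is automatically legal (even when $a_1=a_2$).
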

\begin{proof}
  Let $T^e$ be an $a_1a_2$-trail in $\Ge$ spanning $V(\He)$. Let $T$
  be the corresponding $a_1a_2$-trail in $G$ (that is, whenever $T^e$
  uses an edge $f$ obtained by suppressing a vertex $w$, $T$ uses the
  two edges incident with $w$). Since $a_i$ is an endvertex of $e_i$
  ($i=1,2$), we can construct an $(e_1,e_2)$-trail $T'$ in $G$ by
  prepending $e_1$ and appending $e_2$ to $T$. Since $T$ spans all
  permanent vertices and every edge of $G$ has a permanent endvertex,
  $T'$ is internally dominating.
\end{proof}

In view of Lemma~\ref{l:join}, proving Theorem~\ref{t:main} reduces to
finding an $a_1a_2$-trail spanning $V(\He)$ in $\Ge$ for each choice
of $e_1,e_2$. A basic tool for this is Proposition~\ref{p:qt-join} in
Section~\ref{sec:quasi}.


\section{Quasigraphs}
\label{sec:quasi}

Our proof relies on a strengthening of the so-called Skeletal
Lemma~\cite[Lemma 17]{KV-hamilton}. The required stronger version is
proved in~\cite{KV-quasi}. The formulation and proof of this result
use the language and some theory of quasigraphs; in this section, we
recall just the bare minimum allowing us to state
Theorem~\ref{t:skeletal}.

Recall that a \emph{3-hypergraph} is a hypergraph whose edges have
size 2 or 3. Let $H$ be a 3-hypergraph.

A \emph{quasigraph} in $H$ is a mapping $\pi$ that assigns to each
hyperedge $e$ of $H$ either a subset of $e$ of size 2, or the empty
set. The hyperedges $e$ with $\pi(e)\neq \emptyset$ are said to be
\emph{used} by $\pi$.

Let $\pi$ be a quasigraph in $H$. We let $\pi^*$ denote the graph on
$V(H)$, obtained by considering the pairs $\pi(e)$ ($e\in E(H)$) as
edges whenever $\pi(e)\neq\emptyset$. If $\pi^*$ is a forest, then
$\pi$ is said to be \emph{acyclic}. If $\pi^*$ is the union of a circuit
and a set of isolated vertices, then $\pi$ is a \emph{quasicycle}. The
hypergraph $H$ is \emph{acyclic} if there exists no quasicycle in
$H$.

Let $X\subseteq V(H)$. We say that the quasigraph $\pi$ is
\emph{connected} on $X$ if the induced subgraph of $\pi^*$ on $X$ is
connected. A somewhat more involved notion is anticonnectedness: we
say that $\pi$ is \emph{anticonnected} on $X$ if for each nontrivial
partition $\RR$ of $X$, there is a hyperedge $f$ such that $f$
intersects at least two classes of $\RR$ and $\pi(f)$ is contained in
one of them.

Let $\PP$ be a partition of $V(H)$. If $e\in E(H)$, then $e/\PP$ is
defined as the set of all classes of $\PP$ intersected by $e$. If
there is more than one such class, then $e$ is said to be
\emph{$\PP$-crossing}. The hypergraph $H/\PP$ has vertex set $\PP$ and
its hyperedges are all the sets of the form $e/\PP$, where $e$ is a
$\PP$-crossing hyperedge of $H$. A quasigraph $\pi/\PP$ in this
hypergraph is defined by setting, for every $\PP$-crossing hyperedge
$e$ of $H$,
\begin{equation*}
  (\pi/\PP)(e/\PP) =
  \begin{cases}
    \pi(e)/\PP & \text{if $\pi(e)$ is $\PP$-crossing,}\\
    \emptyset & \text{otherwise.}
  \end{cases}
\end{equation*}

The \emph{complement} $\overline\pi$ of $\pi$ is the subhypergraph of $H$
(on the same vertex set) consisting of the hyperedges not used by
$\pi$.

A partition $\pi$ of $V(H)$ is \emph{$\pi$-skeletal} if both of the
following conditions hold:
\begin{enumerate}[(1)]
  \item for each $X\in\PP$, $\pi$ is both connected on $X$ and
    anticonnected on $X$,
  \item the complement of $\pi/\PP$ in $H/\PP$ is acyclic.
\end{enumerate}

For our purposes, the version of the Skeletal Lemma stated below in
Theorem~\ref{t:skeletal} needs to take care of a particular
configuration we call `bad leaf' since it presents a problem in our
computations. Let us describe this configuration.

Recall that $\pi$ is a quasigraph in a 3-hypergraph $H$. Assume now
that $\pi$ is acyclic. In each component of the graph $\pi^*$,
we choose an arbitrary root and orient all the edges of $\pi^*$ toward
the root. A hyperedge $e$ of $H$ is \emph{associated with} a vertex
$u$ if it is used by $\pi$ and $u$ is the tail of $\pi(e)$ in the
resulting oriented graph. Thus, every vertex has at most one
associated hyperedge, and conversely, each hyperedge is associated
with at most one vertex.

A vertex $u$ of $H$ is a \emph{bad leaf} for $\pi$ (and the given
choice of the roots of the components of $\pi^*$) if all of the
following hold:
\begin{enumerate}[\quad(i)]
\item $u$ is a leaf of $\pi^*$,
\item $u$ is incident with exactly three hyperedges, exactly one of
  which has size 3 (say, $e$), and
\item $e$ is associated with $u$.
\end{enumerate}

\begin{figure}
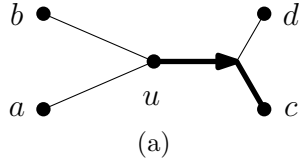

  \centering
  \hf
  \sfig1{}
  \hf
  \caption{A bad leaf $u$.}
  \label{fig:bad}
\end{figure}

Bad leaves can be eliminated at the cost of performing certain local
modifications in the hypergraph. More precisely, if $u$ is a vertex of
the $3$-hypergraph $H$ incident with exactly two hyperedges of size 2
and exactly one hyperedge of size 3, then a \emph{switch at $u$} is
the operation depicted in Figure~\ref{fig:switch}. (We remark that
in~\cite{KV-quasi}, the switch operation acts on quasigraphs in $H$ as
well, but this is not necessary for our purposes.)

We say that a $3$-hypergraph $\tilde H$ is \emph{related} to $H$ if it
can be obtained from $H$ by a finite sequence of switches at suitable
vertices. Note that, in this case, $G(\tilde H)$ is isomorphic to
$G(H)$.

\begin{figure}
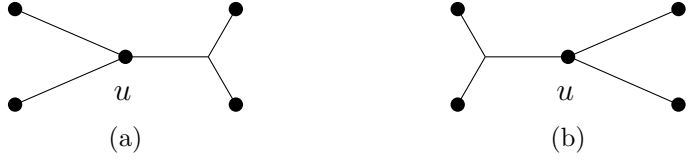

  \hf\subfloat[]{\fig{19}}\hf\subfloat[]{\fig{20}}\hf
  \caption{(a) A 3-hypergraph $H$ with a vertex $u$ suitable for a
    switch. (b) The hyperedges incident with $u$ in the 3-hypergraph
    resulting from the switch. The other hyperedges are not modified.}
  \label{fig:switch}
\end{figure}

We can now finally state the main technical result mentioned above, a
strengthening of the Skeletal Lemma proved in~\cite{KV-quasi}:

\begin{theorem}
  \label{t:skeletal}
  Let $H$ be a 3-hypergraph. There exists a hypergraph $\tilde H$
  related to $H$ and an acyclic quasigraph $\sigma$ in $\tilde H$ such
  that $\sigma$ has no bad leaves (for any choice of the roots of the
  components of $\sigma^*$) and $V(\tilde H)$ admits a
  $\sigma$-skeletal partition $\SSS$.
\end{theorem}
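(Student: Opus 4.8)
The plan is to prove Theorem~\ref{t:skeletal} by starting from the existing Skeletal Lemma of~\cite{KV-hamilton} and then iteratively repairing bad leaves by switches, using a monotone integer potential to guarantee termination. First I would invoke the original Skeletal Lemma to obtain a $3$-hypergraph $H_0$ (initially $H$ itself, or perhaps already after some switches if the old lemma produces one), an acyclic quasigraph $\pi_0$, and a $\pi_0$-skeletal partition $\PP_0$; the only thing missing is the absence of bad leaves. Fix roots in each component of $\pi_0^*$. If $\pi_0$ already has no bad leaves we are done, so suppose $u$ is a bad leaf: it is a leaf of $\pi_0^*$, incident with exactly three hyperedges, exactly one of size $3$ (call it $e$), and $e$ is associated with $u$. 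Since $u$ is then incident with exactly two $2$-hyperedges and one $3$-hyperedge, $u$ is suitable for a switch, so I perform the switch at $u$ to obtain a related hypergraph $H_1 = \sw{H_0}{u}$, and I carry $\pi_0$ across to a quasigraph $\pi_1$ in $H_1$ in the natural way described in~\cite{KV-quasi} (the detail that $G(H_1)\cong G(H_0)$ is already recorded in the excerpt, so the graph-side translation is unaffected). The key local claims are: (a) $\pi_1$ is still acyclic; (b) the partition $\PP_0$, transported to $V(H_1)$ — which is the same vertex set — is still $\pi_1$-skeletal, i.e.\ connectivity and anticonnectedness on each class are preserved and the complement of $\pi_1/\PP_0$ remains acyclic; and (c) $u$ is no longer a bad leaf, and, crucially, no \emph{new} bad leaf is created at the vertices whose incidences changed.

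For the termination argument I would define a potential such as $\Phi(\pi) = $ the number of pairs $(u, r)$ consisting of a vertex $u$ and a choice of roots making $u$ a bad leaf — or, more robustly, an ordinal/lexicographic quantity combining the number of size-$3$ hyperedges $e$ that are associated (over all root choices) with a leaf of $\pi^*$ of the prohibited incidence type, together with something like the total number of size-$3$ hyperedges incident with leaves. The switch at a bad leaf $u$ replaces the size-$3$ hyperedge $e$ at $u$ by a size-$2$ hyperedge (and modifies the two small hyperedges at $u$), which strictly decreases this count while, by claim (c), not increasing it elsewhere. Hence after finitely many switches we reach $\tilde H$ related to $H$ (relatedness is transitive, being closed under finite sequences of switches) carrying an acyclic quasigraph $\sigma$ with no bad leaves for any root choice, together with the still-valid skeletal partition $\SSS = \PP_0$. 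That establishes the theorem.

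The hard part will be claims (b) and (c): verifying that a switch at a bad leaf does not destroy the skeletal structure, and — more delicate — that it does not spawn a fresh bad leaf that defeats the potential. A switch changes which vertex plays the role of the size-$3$ hyperedge's ``missing'' slot and reroutes the two incident $2$-hyperedges, so one must check case by case how the rooted orientation of $\sigma^*$ changes near $u$ and its neighbours, which hyperedges become newly associated with which vertices, and whether any neighbour of $u$ now satisfies all three bad-leaf conditions. The anticonnectedness condition is the subtlest to preserve because it quantifies over all nontrivial partitions of a class; here I would argue that a switch only permutes/relabels the hyperedges incident with $u$ without changing, for any partition $\RR$, whether some incident hyperedge crosses $\RR$ while having its $\sigma$-image on one side — essentially because the switch is a local isomorphism on the relevant incidence structure. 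I expect that the bulk of~\cite{KV-quasi}'s proof is exactly a careful bookkeeping of these local effects together with the choice of potential; the role of the ``bad leaf'' definition is precisely to isolate the one obstruction that the switch operation is designed to remove, so the proof should reduce to showing the switch does its job and changes nothing else that matters.
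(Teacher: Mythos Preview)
The paper does not actually contain a proof of Theorem~\ref{t:skeletal}: the result is explicitly imported from the companion paper~\cite{KV-quasi} (``a strengthening of the Skeletal Lemma proved in~\cite{KV-quasi}''), and only the statement and the definitions needed to parse it appear here. There is therefore no in-paper argument against which your proposal can be compared.

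That said, your sketch has a concrete gap in the termination argument. You write that ``the switch at a bad leaf $u$ replaces the size-$3$ hyperedge $e$ at $u$ by a size-$2$ hyperedge \ldots\ which strictly decreases this count''. This cannot be right as stated: the paper records that for related hypergraphs $G(\tilde H)\cong G(H)$, and since $\lvert V(G(H))\rvert = \lvert V(H)\rvert + \lvert E_3(H)\rvert$ while a switch does not change $V(H)$, a switch must preserve the total number of $3$-hyperedges. In fact the pictured switch at $u$ exchanges which pair of the four neighbours of $u$ in $G(H)$ spans a $3$-hyperedge through $u$; the $3$-hyperedge migrates rather than disappears. Hence any potential based on a global count of $3$-hyperedges, or of $3$-hyperedges incident with leaves, is not obviously monotone under a switch, and your claim~(c) that no new bad leaf is created at $c$ or $d$ is exactly where the difficulty lies: a neighbour that previously had three $2$-hyperedges may, after the switch, acquire the $(2,2,3)$ incidence pattern and become eligible as a bad leaf. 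So the ``iterate-and-decrease'' scheme needs either a much more delicate potential (one that genuinely tracks how the single $3$-hyperedge slot at $u$ interacts with $\sigma^*$ and the rooted orientation) or a different global argument; you have correctly identified (b) and (c) as the hard parts, but you have not yet supplied the mechanism that makes the iteration terminate.
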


Theorem~\ref{t:skeletal} will be used in conjunction with the
following result, implied by a special case of Lemma 28
in~\cite{KV-hamilton}:
\begin{proposition}\label{p:qt-join}
  Let $H$ be a 3-hypergraph and $b_1,b_2$ vertices of $H$. If $H$
  admits an acyclic quasigraph that is both connected and
  anticonnected on $V(H)$, then the graph $G(H)$ contains a
  $b_1b_2$-trail spanning $V(H)$.
\end{proposition}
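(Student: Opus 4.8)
The plan is to build, for the given vertices $b_1,b_2$, a connected subgraph $M$ of $G(H)$ that contains every vertex of $V(H)$ and whose set of odd-degree vertices is exactly $\Setx{b_1,b_2}$ if $b_1\neq b_2$ and is empty if $b_1=b_2$. An Eulerian trail of such an $M$ traverses each edge of $M$ once, so it is a trail of $G(H)$; it is a $b_1b_2$-trail, and it visits every vertex of $M$, in particular all of $V(H)$. So the whole problem is to construct such an $M$. This is, in essence, the construction carried out in the proof of \cite[Lemma~28]{KV-hamilton}; the present statement is its specialisation to the case where the ambient partition of $V(H)$ has a single class, so that ``connected and anticonnected on each class'' reads ``connected and anticonnected on $V(H)$''.

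First, the backbone. Since $\pi$ is acyclic, $\pi^*$ is a forest, and since $\pi$ is connected on $V(H)$, it is in fact a spanning tree $T$ of $V(H)$. Realise $T$ inside $G(H)$ by keeping each tree edge $\pi(e)=\Setx{x,y}$ coming from a $2$-hyperedge, and replacing each tree edge $\pi(e)=\Setx{x,y}$ coming from a $3$-hyperedge $e=\Setx{x,y,z}$ by the path $x\,e\,y$ through the vertex $e\in E_3(H)$; call the resulting subgraph $S$. Then $S$ is a subdivision of $T$ spanning $V(H)$, and each vertex of $E_3(H)$ occurring in $S$ has degree $2$ there, so only vertices of $V(H)$ may have an incorrect parity. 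Put $O=\Set{v\in V(H)}{\deg_S(v)\text{ odd}}$ and let $F=\Setx{b_1,b_2}$ if $b_1\neq b_2$ and $F=\emptyset$ otherwise; both sets are even, so $D:=O\bigtriangleup F$ is even.

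Next, parity correction. A local modification at a hyperedge $f$ that has a vertex outside $\pi(f)$ --- switch on an unused $f$ with a chosen pair, or re-route a used $3$-hyperedge $f$ from $\pi(f)$ to another pair contained in $f$ --- flips the parity of exactly two vertices of $f$ and leaves all other parities unchanged; call the affected pair a \emph{link available at $f$}. The point of anticonnectedness is precisely that links are plentiful: for any nontrivial partition $\RR$ of $V(H)$ there is, by hypothesis, a hyperedge $f$ meeting at least two classes of $\RR$ with $\pi(f)$ inside one class, and checking the two cases ($\pi(f)=\emptyset$; or $f$ a used $3$-hyperedge whose third vertex lies outside the class of $\pi(f)$) shows that $f$ has an available link joining two distinct classes. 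Thus the graph $\Gamma$ on $V(H)$ of all available links is connected, and since $\size D$ is even, $\Gamma$ contains a subgraph $R$ whose odd-degree set is exactly $D$ (pair up the vertices of $D$, join each pair by a path in $\Gamma$, and take the symmetric difference of the edge sets of these paths). Performing the modifications corresponding to $R$ turns $S$ into a graph $M$ with odd-degree set $O\bigtriangleup D=F$, as wanted.

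The delicate step --- and the reason a complete proof is best obtained from \cite[Lemma~28]{KV-hamilton} rather than reproduced here --- is realising the modifications of $R$ in $G(H)$ simultaneously: the links must be drawn using edges disjoint from $S$ and from one another, no two links may run through the same $3$-hyperedge vertex, and the re-routings forced at used $3$-hyperedges must not disconnect the backbone. I expect this bookkeeping to be the main obstacle. It is handled in \cite{KV-hamilton} by working with the combined ``patching'' structure on $H$ instead of individual links and by choosing $R$ so as to respect these constraints; the degenerate cases ($\size{V(H)}\le 1$, $\pi^*$ a single vertex, $b_1=b_2$) are immediate. Granting \cite[Lemma~28]{KV-hamilton}, the proposition is exactly the specialisation described in the first paragraph.
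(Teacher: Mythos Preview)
Your proposal and the paper agree: the paper does not prove this proposition but simply notes that it is implied by a special case of \cite[Lemma~28]{KV-hamilton}, and your write-up ultimately rests on the same citation. The additional content you supply --- the spanning tree realised in $G(H)$, the parity-correction via links guaranteed by anticonnectedness, and the explicit acknowledgement that the simultaneous realisation of these links is the nontrivial bookkeeping deferred to \cite{KV-hamilton} --- is a faithful outline of how that lemma is proved, so the approaches coincide.
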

Proposition~\ref{p:qt-join} will be useful in Section~\ref{sec:final},
where we infer that the assumption of the proposition is satisfied,
which will enable us to apply Lemma~\ref{l:join}.


\section{Counting the hyperedges}
\label{sec:counting}

Recall that $G$ is a graph satisfying the assumptions of
Theorem~\ref{t:main}, and that $H^0$ is a 3-hypergraph associated with
the core $G^0$ of $G$. Additionally, $e_1,e_2$ are fixed edges of $G$,
$a_1,a_2$ are their permanent endvertices, and $\He$ is a
modification of $H^0$ defined in Section~\ref{sec:hamilt-conn}.

By Theorem~\ref{t:skeletal}, there is a 3-hypergraph $\tilde H$
related to $\He$ and an acyclic quasigraph $\sigma$ in $\tilde H$ with
no bad leaves such that $V(\tilde H)$ admits a $\sigma$-skeletal
partition $\SSS$. Our ultimate use of $\sigma$ is to find a connected
$X(e_1,e_2)$-join in the graph $G(\He)$ to be used in
Lemma~\ref{l:join}. Since $G(\tilde H)$ is isomorphic to $G(\He)$ if
$\tilde H$ and $\He$ are related, we may assume without loss of
generality that $\tilde H = \He$.

As we will see in Section~\ref{sec:final}, the proof of
Theorem~\ref{t:main} is simple in the case that $\size\SSS = 1$. In
the following calculations, we therefore assume $\SSS \geq 2$.

Recall that for a $3$-hypergraph $H$, an edge $e$ of $H$ and a
partition $\PP$ of $V(H)$, the notation $e/\PP$ and $H/\PP$ has been
defined in Section~\ref{sec:quasi}.

Let us write $d^0(P)$ for the degree of a vertex $P\in\SSS$ in the
hypergraph $H^0/\SSS$.

We proceed as in Section 8 of~\cite{KV-hamilton}. We set $\tau =
\sigma/\SSS$. Let $n = \size\SSS$ and let $m$ denote the number of
hyperedges of $H^e/\SSS$. For $k\in\Setx{2,3}$, let $m_k$ be the
number of $k$-hyperedges of $H^e/\SSS$ used by $\sigma$, and let
$\overline m_k$ denote the number of $k$-hyperedges of
$\overline\tau$. Since $\SSS$ is $\sigma$-solid, the graph $\tau^*$ is
acyclic. It has $n$ vertices and $m_2 + m_3$ edges, and hence
\begin{equation}
  \label{eq:1}
  m_2 + m_3 \leq n - 1.
\end{equation}
Similarly, the complement $\overline\tau$ is an acyclic
hypergraph. Consider the graph $G(\overline\tau)$, defined in
Section~\ref{sec:introduction}. Since $\overline\tau$ is acyclic, so
is $G(\overline\tau)$. As $G(\overline\tau)$ has $n+\overline m_3$
vertices and $\overline m_2 + 3\overline m_3$ edges, we get
\begin{equation}
  \label{eq:2}
  \overline m_2 + 2\overline m_3 \leq n - 1.
\end{equation}
Moreover, by the assumption, either $\tau^*$ or $G(\overline\tau)$ is
disconnected and therefore it has at most $n-2$ edges. Adding
\eqref{eq:1} to \eqref{eq:2} and using this fact, we find
\begin{equation}
  \label{eq:3}
  m \leq 2n-3-\overline m_3.
\end{equation}

For any hypergraph $H$, let $s(H)$ be the sum of vertex degrees in
$H$. Let us set
\begin{equation*}
  \eps = s(H^0/\SSS) - s(H^e/\SSS)
\end{equation*}
and observe that by the definition of the hypergraph $\He$,
$\eps \leq 4$. Furthermore, we have
\begin{align*}
  s(H^e/\SSS) &= 2(m_2 + \overline m_2) + 3(m_3 + \overline
  m_3) = 2m + (m_3 + \overline m_3) \\
  &\leq 4n - 6 + m_3 - \overline m_3,
\end{align*}
where the last inequality follows by using \eqref{eq:3} to substitute
for $m$. Substituting $s(H^0/\SSS) - \eps$ for $s(H^e/\SSS)$, we obtain
\begin{equation}
  \label{eq:s-h0}
  s(H^0/\SSS) \leq 4n-6 + (m_3 - \overline m_3) + \eps.
\end{equation}

The quasigraph $\tau$ in $H^e/\SSS$ determines an (acyclic) quasigraph
$\tau^0$ in $H^0/\SSS$ in a natural way. Let $m^0_3$ be the number of
3-hyperedges of $H^0/\SSS$ used by $\tau^0$ and observe that $m^0_3
\geq m_3$. Furthermore, let $\widetilde m^0_i$ ($i\in\Setx{2,3}$) be
the number of $i$-hyperedges of $H^0/\SSS$, whether used by $\tau^0$
or not.

Inequality~\eqref{eq:s-h0} has two corollaries. Firstly, using the
fact that $\eps \leq 4$ and $m^0_3 \geq m_3$, and ignoring the
$\overline m_3$ term, we find
\begin{equation}
  \label{eq:main}
  \sum_{P\in V(H^0/\SSS)}(d^0(P) - 4) - m^0_3 \leq -2.
\end{equation}

For the second corollary, note that since $s(H^0/\SSS) = 2\widetilde m^0_2
+ 3\widetilde m^0_3$ and $m_3 - \overline m_3 \leq \widetilde m^0_3$,
\eqref{eq:s-h0} implies
\begin{equation}
  \label{eq:small}
  \widetilde m^0_2 + \widetilde m^0_3 \leq 2n-3 + \frac\eps 2.
\end{equation}

For later use, we record an observation concerning classes $X$ of
$\SSS$ such that $\size{X^+} \geq 1$; let us call such classes
\emph{nontrivial}.
\begin{observation}
  \label{obs:nontriv}
  If $X$ is a nontrivial class of $\SSS$, then $G^0[X^+]$ is not a
  matching.
\end{observation}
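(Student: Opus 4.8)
The statement follows quickly from the definition of the set $\xhyper X$, so the plan is short: I would exhibit a single vertex of $G^0[\xhyper X]$ of degree at least $2$. Since $X$ is nontrivial, $\xhyper X$ contains a temporary vertex; fix such a $w\in W$. Recall that $W\subseteq W_3$, so $d_{G^0}(w)=3$, and that, by the definition of $\xhyper X$, at least two of the three edges of $G^0$ incident with $w$ have their other endvertex in $X$.

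I would then observe that each of those two edges has both endvertices in $\xhyper X$: one endvertex is $w\in\xhyper X$, and the other lies in $X\subseteq\xhyper X$. Hence both edges are present in $G^0[\xhyper X]$, so $d_{G^0[\xhyper X]}(w)\geq 2$. Since a graph whose edge set is a matching has maximum degree at most $1$, this shows that $G^0[\xhyper X]$ is not a matching.

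There is no real obstacle here; the only point requiring a moment's care is that the two edges joining $w$ to $X$ may be parallel, i.e.\ both may join $w$ to a single vertex $v\in X$. In that case $G^0[\xhyper X]$ contains two parallel copies of the edge $wv$, which still prevents its edge set from being a matching (and, counting multiplicities, $w$ still has degree at least $2$ in $G^0[\xhyper X]$), so the conclusion holds in every case.
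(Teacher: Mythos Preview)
Your argument covers only part of what is needed. Nontriviality of $X$ means $\size{X^+}\geq 2$ (the paper's ``$\geq 1$'' is evidently a slip, as its own proof of the observation makes clear), and this does \emph{not} force $X^+$ to contain a temporary vertex. Indeed, if $\size X\geq 2$ then $\size{X^+}\geq\size X\geq 2$ automatically, and it is entirely possible that $X^+=X$, i.e.\ no vertex of $W$ has two $G^0$-edges into $X$. Your proof therefore handles only the situation where some $w\in W$ lies in $X^+$; this subsumes the paper's case $\size X=1$ (where nontriviality does force $X^+\setminus X\neq\emptyset$), but it leaves the case $\size X>1$ with $X^+=X$ untreated. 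In that case your very first step, ``fix such a $w\in W$'', fails.

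For the missing case the paper appeals to the defining property of the $\sigma$-skeletal partition $\SSS$: the quasigraph $\sigma$ is both connected and anticonnected on each class $X$. These two conditions together yield two distinct hyperedges of $\He$ each meeting $X$ in at least two vertices and sharing a vertex of $X$; tracing them back to $G^0$ produces two incident edges of $G^0[X^+]$. This structural input from the skeletal partition is precisely what your argument lacks, and without it the observation cannot be established for classes with $\size X>1$.
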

\begin{proof}
  Suppose that $X\in\SSS$ is nontrivial. We prove that $G^0[X^+]$ has
  at least two incident edges. If $\size X > 1$, then this follows
  from the fact that $\sigma$ is both connected and anticonnected on
  $X$ by the choice of $\SSS$. On the other hand, if $\size X = 1$,
  then some vertex of $X^+-X$ is incident with at least two edges to
  $X$ and the assertion also holds.
\end{proof}


\section{Structural observations}
\label{sec:structural}

We continue to use the notation and assumptions of
Section~\ref{sec:counting}. The objective of this and the following
section is to rule out most cases in the proof of Theorem~\ref{t:main}
by establishing the following:
\begin{proposition}\label{p:S4}
  If $G^0$ has at least 6 vertices, then the partition $\SSS$ has at
  most 4 classes.
\end{proposition}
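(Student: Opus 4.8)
The plan is to combine the counting inequalities from Section~\ref{sec:counting} with a discharging argument on the hypergraph $H^0/\SSS$. The principal tool is inequality~\eqref{eq:main}, namely
\begin{equation*}
  \sum_{P\in V(H^0/\SSS)}(d^0(P) - 4) - m^0_3 \leq -2,
\end{equation*}
together with the fact (Observation~\ref{obs:3ec} and the relation $d_{H^0}(Y) = d_{G^0}(\xhyper Y)$) that each $d^0(P) \geq 3$, plus the structural restrictions on $H^0$ developed in Section~\ref{sec:structural} -- in particular that $G^0$ is essentially $4$-edge-connected and $2$-essentially $9$-edge-connected, and Observation~\ref{obs:nontriv} that no nontrivial class can induce a matching in $G^0$. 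The idea is that a class $P$ of degree $3$ in $H^0/\SSS$ is `expensive': it contributes $-1$ to the left-hand side, but its low degree forces strong structure on the rest of the partition (essentially that $P$ and the $3$ hyperedges leaving it are fairly isolated), so such classes cannot be too numerous relative to $n$, and in fact cannot coexist with a large partition.

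First I would assign each class $P$ an initial charge $d^0(P) - 4$ and each $3$-hyperedge of $H^0/\SSS$ used by $\tau^0$ a charge $-1$ contributing to the $-m^0_3$ term; by \eqref{eq:main} the total charge is at most $-2$. The discharging rules would move charge from high-degree classes and from the `slack' in the hyperedge count toward the deficient classes (those with $d^0(P) = 3$), using the structural lemmas of Section~\ref{sec:structural} to bound how much any class must send. The goal is to show that if $n = \size\SSS \geq 5$, then after discharging every class has nonnegative charge except possibly a bounded correction, contradicting the bound of $-2$ -- or, more precisely, to derive a numeric contradiction directly. A cleaner route may be to argue that each deficient class of degree $3$ can be `charged' against a distinct edge of $G^0$ or a distinct hyperedge of $\overline\tau$ using the $2$-essential $9$-edge-connectivity (a degree-$3$ class $P$ with $\size{\xhyper P} \geq 2$ would give an essential small edge-cut, so nontrivial deficient classes are ruled out, and trivial ones of degree $3$ are single vertices of $H^0$ of degree $3$ in $H^0/\SSS$), so that the count of deficient classes is controlled and \eqref{eq:main} forces $n \leq 4$.

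The main obstacle I expect is the bookkeeping around hyperedges of size $3$: the term $-m^0_3$ in \eqref{eq:main} is what lets a degree-$3$ class be affordable, and a $3$-hyperedge of $H^0/\SSS$ is incident with three (not necessarily distinct) classes, so one must be careful not to double-count when attributing its $-1$ credit, and must verify that such a hyperedge cannot simultaneously `rescue' two different deficient classes without violating connectivity or acyclicity of $\overline\tau$. I would handle this by a careful case analysis on the sizes and mutual positions of the deficient classes and the $3$-hyperedges incident with them, using Observation~\ref{obs:nontriv} and the edge-connectivity hypotheses of Lemma~\ref{l:g0} to exclude the problematic configurations; the hypothesis that $G^0$ has at least $6$ vertices is presumably needed to rule out a sporadic small exception. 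If the direct discharging proves unwieldy, I would fall back on establishing a few intermediate structural claims in Section~\ref{sec:structural} first -- e.g.\ that two deficient classes cannot be `close' in $H^0/\SSS$ -- and then run a simple averaging argument over \eqref{eq:main} and \eqref{eq:small}.
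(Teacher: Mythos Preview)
Your plan is the paper's plan: assume $\size\SSS\geq 5$, assign initial charge $d^0(P)-4$ to each class and $-1$ to each $3$-hyperedge used by $\tau^0$, invoke \eqref{eq:main} to see the total is negative, and redistribute so that everything becomes nonnegative. The structural input from Section~\ref{sec:structural} is also used exactly as you anticipate. Two concrete ingredients, however, are missing from your sketch, and without them the discharging does not close.

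First, the structural lemma you need is sharper than ``two deficient classes cannot be close.'' The paper proves (Lemma~\ref{l:forb}) that if $d^0(P)=3$ then every neighbour $Q$ of $P$ in $H^0/\SSS$ satisfies $d^0(Q)\geq 7$, and if $d^0(P)=4$ with $P$ incident to a $3$-hyperedge then $d^0(Q)\geq 6$. These exact thresholds are what make the arithmetic of the four discharging rules (D1)--(D4) balance; a qualitative ``not close'' statement would not give enough charge to the neighbours of a degree-$3$ class to cover both its own $-1$ and the $-1$ of any associated $3$-hyperedge.

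Second, and more seriously, you have not used the \emph{no bad leaves} clause of Theorem~\ref{t:skeletal}. When $d^0(P)=3$, the class $P$ is a single vertex $v$ of $H^0$, and $P$ starts at charge $-1$ (or effectively $-2$ if $P\in V_3^\triangle$). The only way to rescue $P$ in the paper's scheme is to guarantee that $P$ is \emph{not} a leaf of $(\tau^0)^*$: then there is an edge of $(\tau^0)^*$ with head $P$, and its tail can send a full unit of charge via rule~(D2). This non-leaf property is exactly what the extra hypothesis on $\sigma$ supplies. Your alternative idea of ``charging each deficient class against a distinct edge of $G^0$ or a distinct hyperedge of $\overline\tau$'' does not produce the missing unit, because the complement $\overline\tau$ is acyclic and a degree-$3$ leaf of $\tau^*$ incident only with hyperedges in $\overline\tau$ has nothing to draw on. So the discharging genuinely relies on the strengthened Skeletal Lemma, not just on \eqref{eq:main} and edge-connectivity.
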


For the sake of a contradiction, let us assume that $\size\SSS \geq
5$. We now prove several claims concerning the structure of the
hypergraph $H^0/\SSS$ in this case.

\begin{lemma}\label{l:path}
  Suppose that $G^0$ has at least 6 vertices. Then the following hold:
  \begin{enumerate}[\quad(i)]
  \item $G^0$ contains no path of length 2 with two vertices of degree
    3 and one vertex of degree at most 4,
  \item no permanent vertex of degree 3 in $G^0$ is adjacent to a
    permanent vertex of degree at most 4.
  \end{enumerate}
\end{lemma}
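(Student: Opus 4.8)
The plan is to prove both parts by the same scheme: assuming the offending configuration exists, exhibit a small $2$-essential edge-cut, which is forbidden by Lemma~\ref{l:g0}(iii) for a cut of $G^0$, or by the $2$-essential $9$-edge-connectivity of $G$ for a cut of $G$ — in the latter case I read off the size of the cut from the equality $d_{G^0}(X)=d_G(\xcore X)$ of Observation~\ref{obs:cuts} and verify $2$-essentiality directly in $G$. The workhorse is the following: \emph{if $\size X\le 3$, $G^0[X]$ is connected with at least two edges, and $d_{G^0}(X)\le 6$, then $\bd{G^0}{X}$ is $2$-essential.} Indeed, otherwise every component of $G^0-\bd{G^0}{X}$ besides $G^0[X]$ is a single vertex (contributing $\ge 3$ to $d_{G^0}(X)$) or a single edge (contributing $\ge 4$); with $\alpha$ of the first kind and $\beta$ of the second we get $3\alpha+4\beta\le 6$, while $\size{V(G^0)}\ge 6$ forces $\alpha+2\beta\ge 3$, an impossibility. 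Hence $d_{G^0}(X)\ge 9$, a contradiction; the same count runs after re-subdividing into $G$.

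For part~(i): if $x\,y\,z$ is such a path, put $X=\Setx{x,y,z}$. Then $G^0[X]$ is connected (it contains the distinct edges $xy$ and $yz$) with at least two edges, and $d_{G^0}(X)=d_{G^0}(x)+d_{G^0}(y)+d_{G^0}(z)-2\size{E(G^0[X])}\le 3+4+3-4=6$, so the workhorse applies and we are done. (No discussion of parallel edges is needed, since the two path edges already give $\size{E(G^0[X])}\ge 2$.)

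For part~(ii): suppose $u$ is permanent with $d_{G^0}(u)=3$ and is adjacent to a permanent vertex $v$ with $d_{G^0}(v)\le 4$. \emph{Reduction.} Look at $\xcore{\Setx{u,v}}$: by Observation~\ref{obs:cuts}, $d_G(\xcore{\Setx{u,v}})=d_{G^0}(\Setx{u,v})=3+d_{G^0}(v)-2\size{E(G^0[\Setx{u,v}])}\le 5$. If $G[\xcore{\Setx{u,v}}]$ carries at least two edges, then it is a connected component of $G-\bd{G}{\xcore{\Setx{u,v}}}$ with $\ge 2$ edges, and (passing to $G^0$ as in the workhorse) so does a component on the other side, making this a $2$-essential cut of size $\le 5<9$ — a contradiction. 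Hence we may assume $u$ and $v$ are joined by a single ordinary edge of $G$, neither of them has a pendant neighbour, and no transient vertex has its $G$-neighbourhood inside $\Setx{u,v}$; in particular $d_G(u)=3$ and $d_G(v)\le 4$. \emph{Structure of $u$.} Since $u$ is permanent of degree $3$, maximality of the set $W$ of temporary vertices gives two cases: either $u$ has a temporary neighbour $w$ (then $d_{G^0}(w)=3$ and $w\ne v$), or $u$ is protected; in the latter case, since transient vertices are independent, suppressed paths have length exactly $2$, and $u$ has no pendant, $u$ must lie on a $G^0$-edge $ua$ realized in $G$ as a path $u\,z\,a$ with $z$ transient of $G$-degree $2$ and $a\in V(G^0)\setminus\Setx{u,v}$. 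If $u$ has a temporary neighbour $w$, take $X=\Setx{u,v,w}$: $G^0[X]$ is connected with the edges $uv,uw$ and $d_{G^0}(X)\le 3+4+3-4=6$, so the workhorse finishes. If $u$ is protected and $a$ is temporary, this is the previous case; if $a$ is permanent with $d_{G^0}(a)=3$, take $X=\Setx{u,v,a}$: $G^0[X]$ contains the path $v\,u\,a$ and $d_{G^0}(X)\le 3+d_{G^0}(v)+3-4\le 6$, workhorse again.

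The remaining case, $a$ permanent with $d_{G^0}(a)\ge 4$, is the one I expect to be the real obstacle: now the cuts around $\Setx{u,v,a}$ in $G^0$ need not be small, nor $2$-essential, and the point is to \emph{absorb the degree-$2$ transient vertex $z$ into the separating set and argue in $G$}. Put $S=\Setx{u,v,z}$. Then $G[S]$ is the path $z\,u\,v$ (connected, two edges), and since $d_G(z)=2$ one gets $d_G(S)=d_G(u)+d_G(z)+d_G(v)-2\size{E(G[S])}=3+2+d_G(v)-4=d_G(v)+1\le 5$. For $2$-essentiality, consider $a$: it lies outside $S$, has $d_G(a)\ge d_{G^0}(a)\ge 4$, and of its edges only $az$ and possibly $av$ reach $S$ — unless $a$ is also adjacent to $u$, in which case the resulting parallel $G^0$-edge $ua$ forces $d_{G^0}(a)\ge 5$ by Lemma~\ref{l:g0}(ii), so at most the three edges $az,av,au$ reach $S$; in every case $a$ retains at least two edges on the far side, so that side has a component with $\ge 2$ edges. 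Hence $\bd{G}{S}$ is a $2$-essential cut of size $\le 5<9$, the final contradiction. Apart from the routine reductions, the non-mechanical ingredient is exactly this last step — realising that the subdivision vertex $z$ should be pulled \emph{inside} the cut, so that $G$ rather than $G^0$ is the right ambient graph — together with the attendant bookkeeping of parallel edges (which are admissible only when one of their endpoints has degree $\ge 5$).
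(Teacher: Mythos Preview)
Your part~(i) is correct and is essentially the paper's argument: both compute $d_{G^0}(X)\le 6$ for the three-vertex set and reach a contradiction by counting degrees on the complementary side.

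For part~(ii) your approach is more elaborate than the paper's, and the final sub-case contains a gap. The paper disposes of most of your case analysis in one line: once (i) holds, a permanent degree-$3$ vertex $u$ cannot have a temporary neighbour $w$, since then $d_{G^0}(w)=3$ and the path $w\,u\,v$ violates~(i). This replaces your ``temporary neighbour'' case and your ``$a$ temporary'' and ``$a$ permanent of degree $3$'' sub-cases simultaneously; only the protected case remains.

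In that remaining case you take $S=\Setx{u,v,z}$ and try to certify $2$-essentiality by showing that $a$ keeps at least two edges in $G-S$. The bound ``at most the three edges $az,av,au$ reach $S$'' is not justified: parallel edges are allowed in $G$, and $v$ may have up to three edges to $a$ (since $d_G(v)\le 4$ with one edge to $u$), so $a$ could have as many as five edges into $S$. Your count therefore does not by itself force $a$ to retain two edges on the far side. The clean repair is to stop looking at $a$ altogether. You already know $d_{G^0}(\Setx{u,v})\le 5$ and $\size{V(G^0)\setminus\Setx{u,v}}\ge 4$; your own workhorse then shows that $G^0-\Setx{u,v}$ has a component $C$ with at least two edges. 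The $G$-realization of $C$ lies entirely in $G-S$, because the only transient vertex in $S$ is $z$, and $z$ subdivides a $G^0$-edge incident with $u$, hence not an edge of $C$. Thus $\bd{G}{S}$ is $2$-essential of size $\le 5$, the desired contradiction. This is precisely what the paper compresses into ``$G^0-\Setx{x,y}$ must be a matching and we obtain a similar contradiction as in the proof of~(i).''
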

\begin{proof}
  (i) We prove that if $x_1x_2x_3$ is a path in $G^0$, then
  $\sum_{i=1}^3 d_{G^0}(x_i) \geq 11$. Suppose the contrary. Define
  $X=\Setx{x_1,x_2,x_3}$. Then $d_{G^0}(X) \leq 6$. Since $G^0-X$ must
  be a matching on at least 3 vertices, and $G^0$ has minimum degree
  at least 3, we have $d_{G^0}(X) \geq 7$. This is a contradiction.

  (ii) Let $x,y$ be permanent vertices of $G^0$ such that $d_{G^0}(x)
  = 3$ and $d_{G^0}(y) \leq 4$. By part (i), the vertex $x$ has no
  temporary neighbour. Therefore, $x$ must have a transient neighbour
  in $G$. Since $G$ is 2-essentially 9-edge-connected,
  $G^0-\Setx{x,y}$ must be a matching and we obtain a similar
  contradiction as in the proof of (i).
\end{proof}

\begin{figure}
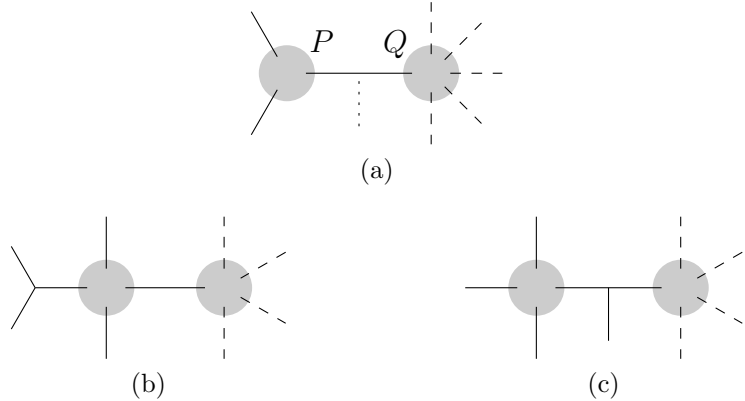
\centering
  \sfig9{}\\
  \hf\sfig{10}{}\hf\sfig{11}{}\hf
  \caption{Forbidden configurations in $H^0/\SSS$. Figure (a)
    illustrates part (i) of Lemma~\ref{l:forb}, (b) and (c) relate to
    part (ii). The gray regions represent the classes $P$ and $Q$ of
    $\SSS$, dashed lines represent optional hyperedges. The dotted
    line in figure (a) means that the hyperedge can be of size 2 or
    3.}
  \label{fig:forb}
\end{figure}

\begin{lemma}\label{l:forb}
  Let $P,Q$ be neighbouring vertices of the hypergraph $H^0/\SSS$. If
  $\size{\SSS} \geq 5$ and $G^0$ has at least 6 vertices, then the
  following holds:
  \begin{enumerate}[\quad(i)]
  \item if $d_{H^0/\SSS}(P) = 3$, then $d_{H^0/\SSS}(Q) \geq 7$,
  \item if $d_{H^0/\SSS}(P) = 4$ and $P$ is incident with a 3-hyperedge of
    $H^0/\SSS$, then $d_{H^0/\SSS}(Q) \geq 6$.
  \end{enumerate}
\end{lemma}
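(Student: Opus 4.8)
The plan is to transfer the statement to the graph $G^0$ and argue with edge-cuts. First I would check that, for any class $X$ of $\SSS$, one has $d^0(X)=d_{H^0}(X)=d_{G^0}(\xhyper X)$: a hyperedge of $H^0$ meeting $X$ fails to be contained in $X$ exactly when it is $\SSS$-crossing (its vertices outside $X$ are permanent, hence lie in other classes), so the hyperedges of $H^0/\SSS$ at $X$ are, with multiplicity, exactly those of $\bd{H^0}{X}$, while $d_{H^0}(X)=d_{G^0}(\xhyper X)$ was observed in Section~\ref{sec:hyper}. Moreover, since $\sigma$ is connected on every class, each pair $\sigma(e)\subseteq X$ is either an edge of $G^0$ inside $\xhyper X$ or a path of length $2$ through a vertex of $W$ belonging to $\xhyper X$; hence $G^0[\xhyper X]$ is connected. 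Both parts of the lemma thus become assertions about cuts of $G^0$ around $\xhyper P$ and $\xhyper Q$.

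For part (i), $\bd{G^0}{\xhyper P}$ has size $3$, so by essential $4$-edge-connectivity it is not essential; as $G^0[\xhyper P]$ is connected, either $\xhyper P=\Setx p$ is a single vertex, or $G^0-\xhyper P$ is edgeless. In the latter case every vertex outside $\xhyper P$ sends its $\ge 3$ edges into $\xhyper P$, leaving at most one vertex, hence at most two classes, outside $\xhyper P$ --- impossible since $\size\SSS\ge 5$. So $P=\xhyper P=\Setx p$ with $d_{G^0}(p)=3$. The same kind of count, now using that $G^0$ is $2$-essentially $9$-edge-connected and that the remaining classes need room in $G^0-\xhyper Q$, gives $\size{\xhyper Q}\le 2$; when $\size{\xhyper Q}=2$ the two vertices are adjacent with degree sum $d^0(Q)+2$, and a short case check (Lemma~\ref{l:path}(ii) when both are permanent, a parallel-edge argument otherwise) rules out $d^0(Q)\le 5$ in this subcase. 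For part (ii) the hypothesis on the $3$-hyperedge supplies a vertex $w\in W$ with one neighbour in $P$ and two in two further classes, and the analogous connectivity count gives $P=\xhyper P=\Setx p$ with $d_{G^0}(p)=4$.

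The heart of the argument is a single edge-cut. Suppose $d^0(Q)\le 6$ (for (i)) and set $Z=\xhyper P\cup\xhyper Q$, enlarged by $w$ if $P,Q$ are linked in $H^0/\SSS$ only through a $3$-hyperedge $h(w)$. Then $G^0[Z]$ is connected and $d_{G^0}(Z)=d_{G^0}(\xhyper P)+d_{G^0}(\xhyper Q)-2t\le 3+6-2=7<9$, where $t\ge 1$ counts the linking edges. If $G^0[Z]$ has at least two edges, $2$-essential $9$-edge-connectivity forces every component of $G^0-Z$ to carry at most one edge; since at least three of the remaining classes sit inside $G^0-Z$, each spanning (with its absorbed $W$-vertices) a connected subgraph confined to one such component, and at most $7$ edges leave $Z$, the minimum-degree-$3$ condition pins $G^0-Z$ down to exactly three permanent vertices forming singleton classes, two of which are joined by an edge and all of degree $3$ --- contradicting Lemma~\ref{l:path}(ii). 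Taking $d^0(Q)\le 5$ yields the contradiction in part (ii) in the same way, the weaker conclusion $d^0(Q)\ge 6$ matching the value $d^0(P)=4$.

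The step I expect to be the main obstacle is the residual subcase of (i) in which $G^0[Z]$ has only one edge; this occurs precisely when $\xhyper Q=\Setx q$ with $d_{G^0}(q)\in\Setx{5,6}$ and $pq\in E(G^0)$, and then the cut around $\Setx{p,q}$ is not $2$-essential for a harmless reason. To handle it one must enlarge $Z$ by a carefully chosen neighbour of $q$ (or exploit the hyperedges incident with $p$ in $\He$ together with the no-bad-leaf property of $\sigma$) so as to restore a two-edge piece while keeping $d_{G^0}(Z)\le 8$, and then rerun the component-location argument. Organizing this bookkeeping --- split according to whether $P$ and $Q$ are linked by a plain edge or by a $3$-hyperedge, and on the degrees of $p$'s other two neighbours, each forced by Lemma~\ref{l:path} to be a vertex of $W$ or a permanent vertex of degree $\ge 5$ --- is where the real work lies.
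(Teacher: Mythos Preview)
Your overall strategy coincides with the paper's: pass to $G^0$, look at the edge-cut around $(P\cup Q)^+$, use $2$-essential $9$-edge-connectivity to force the complement to be a matching, and finish with Lemma~\ref{l:path}(ii). You also correctly isolate the only delicate subcase, namely $Z=\{p,q\}$ with a single edge $pq$.

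The gap is in how you propose to dispose of that subcase. Enlarging by a ``carefully chosen neighbour of $q$'' does not work: $q$ has degree $5$ or $6$ and nothing forces any neighbour of $q$ to have small degree, so you cannot keep $d_{G^0}(Z)\le 8$ that way. The no-bad-leaf property of $\sigma$ is a red herring here --- in the very case you need to handle (when $p$ has no temporary neighbour) all three hyperedges at $p$ in $H^0$ are $2$-hyperedges, so the bad-leaf configuration cannot arise and the property gives no information; the paper does not use it in this lemma at all.

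The missing idea is the reason $p$ is permanent in the first place. Since $W$ was chosen as a \emph{maximal} independent subset of the unprotected degree-$3$ vertices and $p\notin W$, either $p$ has a temporary neighbour $z\in W$, or $p$ is \emph{protected} (adjacent in $G$ to a transient vertex). In the first case the paper enlarges $X$ by $z$ --- a neighbour of $p$, not of $q$; since $d_{G^0}(z)=3$ and Lemma~\ref{l:path}(i) forbids a second temporary neighbour of $p$, this keeps $d_{G^0}(X)\le 8$ while guaranteeing two incident edges in $G^0[X]$. In the protected case the argument is transferred from $G^0$ to $G$: the transient neighbour of $p$ supplies the second edge in $G[\xcore X]$, and one uses that $G$ itself is $2$-essentially $9$-edge-connected. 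Your closing remark that $p$'s other neighbours are ``a vertex of $W$ or a permanent vertex of degree $\ge 5$'' is the first half of this dichotomy, but you never say what to do when none of them lies in $W$ --- and that is precisely the protected case, where neither of your two proposed fixes applies.
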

\begin{proof}
  We prove (i). Since $G^0$ is essentially 4-edge-connected, the class
  $P$ is trivial by Observation~\ref{obs:nontriv}; say, $P =
  \Setx{u}$. Being a permanent vertex of degree 3 in $G^0$, the vertex
  $u$ is either protected, or adjacent to a temporary vertex. 

  We set
  \begin{equation*}
    X =
    \begin{cases}
      (P\cup Q)^+ \cup \Setx{z} & \text{if $u$ is adjacent to a
        temporary vertex $z$,} \\
      (P\cup Q)^+ & \text{otherwise}.
    \end{cases}
  \end{equation*}

  Suppose that $u$ is adjacent to a temporary vertex $z$. By
  Lemma~\ref{l:path}(i), $u$ is not adjacent to any other temporary
  vertex, which implies
  \begin{equation}
    \label{eq:8}
    d_{G^0}(X) \leq 8.
  \end{equation}
  Since $G^0[X]$ is not a matching, $G^0-X$ must be a matching as
  $G^0$ is 2-essentially 9-edge-connected. 

  A similar argument shows that $G^0-X$ is a matching just as well if
  $u$ is protected. In particular, in either case, no temporary vertex
  of $G^0$ has two neighbours outside $X$.

  Enumerate the classes of $\SSS$ other than $P$ and $Q$ as
  $Y_1,\dots,Y_k$. By Observation~\ref{obs:nontriv}, each $Y_i$ is a
  trivial class, say $Y_i = \Setx{y_i}$.

  Since $k\geq 3$ and $d_{G^0}(y_i) \geq 3$ for $1\leq i \leq k$,
  inequality~\eqref{eq:8} implies that $G^0$ contains an edge joining
  two of the vertices $y_i$ --- say, $y_1$ and $y_2$. Since
  $d_{G^0}(\Setx{y_2,\dots,y_k}) \geq 3$, we have
  $d_{G^0}(\Setx{y_1,y_2})\leq 5$, so without loss of generality,
  $d_{G^0}(y_1) = 3$ and $d_{G^0}(y_2) \leq 4$. This is a
  contradiction with Lemma~\ref{l:path}(ii).

  Part (ii) can be proved using a minor modification of the above
  argument, which we leave to the reader.
\end{proof}


\section{Discharging}
\label{sec:discharging}

We continue the discussion of Section~\ref{sec:structural} by using a
discharging-type argument to prove Proposition~\ref{p:S4}. The
discharging process takes place in the hypergraph $H^0/\SSS$. Recall
our hypothesis that the partition $\SSS$ has at least 5 classes.

In Section~\ref{sec:counting}, we defined $\tau^0$ as the quasigraph
in $H^0/\SSS$ corresponding to the quasigraph $\tau = \sigma/\SSS$ in
$H^e/\SSS$. The notion of a hyperedge associated with a vertex will be
carried over from $H^e$ to $H^0/\SSS$: by definition, a hyperedge
$e/\SSS$ of $H^0/\SSS$ is associated with $P\in\SSS$ if the
corresponding hyperedge of $H^e$ is associated with a vertex of $H^e$
contained in $P$. Note that the definition makes sense thanks to the
fact that $\sigma[P]$ is a quasitree (and hence $\sigma[P]^*$ is
connected).

We write $V_i = V_i(H^0/\SSS)$. Furthermore, $V_i^\triangle$ denotes
the subset of $V_i$ consisting of vertices which have an associated
3-hyperedge and $V^\triangle$ is the union of all
$V_i^\triangle$. Given a vertex $P$ of $H^0/\SSS$, the symbol $\mnbr
P$ denotes the multiset consisting of vertices $Q$ such that $P$ and
$Q$ are contained in a hyperedge of $H^0/\SSS$, with one occurrence of
$Q$ for each such hyperedge.

We begin by assigning charges to the vertices and hyperedges of
$H^0/\SSS$, guided by inequality~\eqref{eq:main}:
\begin{itemize}
\item each vertex $P$ will get a charge of $d^0(P) - 4$ units,
\item a 3-hyperedge of $H^0/\SSS$ will get a charge of $-1$ if it is
  used by $\tau^0$ and a zero charge otherwise,
\item 2-hyperedges of $H^0/\SSS$ get zero charge.
\end{itemize}
By~\eqref{eq:main}, the total charge is negative. At the same time,
the only elements of $H^0/\SSS$ with negative charge are 3-vertices
and 3-hyperedges used by $\tau^0$.

As usual in discharging arguments, we will describe rules for the
redistribution of charge which keep the total charge unchanged and
(given the assumptions about the graph $G$) make all the individual
charges non-negative. This contradiction will show that $\sigma$ is
actually a quasitree with connected complement.

\begin{figure}
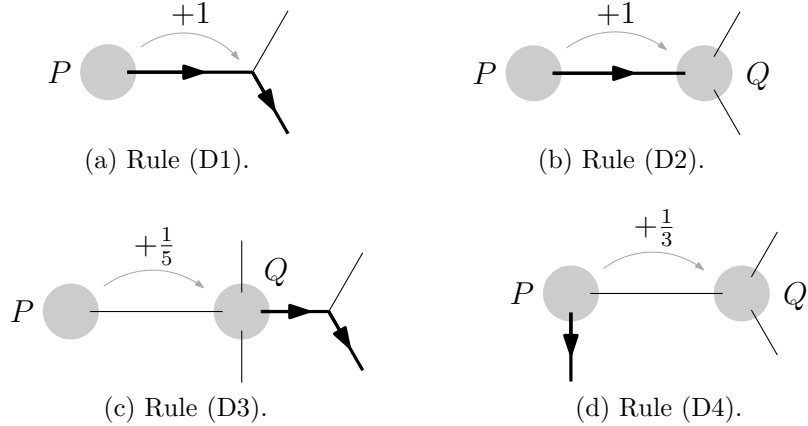

  \centering
  \hf\sfig{12}{Rule (D1).}\hf\sfig{13}{Rule (D2).}\hf\\
  \hf\sfig{15}{Rule (D3).}\hf\sfig{14}{Rule (D4).}\hf
  \caption{The rules for the redistribution of charge. Thick lines
    represent the quasigraph $\tau^0$, with the association of a
    hyperedge to a vertex shown by thick arrows. Gray arrows indicate
    the flow of the stated amount of charge. The degree of $P$ is not
    represented.}
  \label{fig:rules}
\end{figure}

Charge will only be sent by vertices, the recipient may be either a
vertex or a hyperedge. Let $P$ be a vertex of $H^0/\SSS$ (that is,
$P\in\SSS$). The rules (more than one of which may apply) are in the
following list. See Figure~\ref{fig:rules} for a schematic
representation.
\begin{enumerate}[\quad(D1)]
\item if $P\in V^\triangle$, then $P$ sends its associated hyperedge 1
  unit of charge,
\item if $P$ has an associated 2-hyperedge whose head is a degree 3
  vertex $Q$, then $P$ sends 1 unit of charge to $Q$,
\item if $P$ has a neighbour $Q$ in $V_4^\triangle$, then $P$ sends
  $Q$ a charge of $1/5$ for each common hyperedge,
\item if $P$ has a degree 3 neighbour $Q$, then $P$ sends $Q$ a charge
  of $1/3$ for each common hyperedge which is not associated with $P$.
\end{enumerate}

We claim that after the redistribution of charge, all vertices and
hyperedges of $H^0/\SSS$ will have nonnegative charge. This is clear
for 2-hyperedges and for 3-hyperedges not used by
$\tau^0$. Furthermore, each 3-hyperedge used by $\tau^0$ will obtain 1
unit of charge by rule (D1), making the resulting charge zero.

Let us therefore investigate the ways a vertex $P$ may be
discharged. Suppose first that the degree $d^0(P)$ of $P$ is at least
7. Since at most one unit of charge is transferred from $P$ based on
rules (D1) and (D2), the total transfer from $P$ is at most
\begin{equation*}
  1 + (d^0(P) - 1)\cdot\frac13 \leq d^0(P)-4,
\end{equation*}
where the inequality follows from the assumption that $d^0(P) \geq
7$. Since the right hand side is the original charge of the vertex,
the resulting charge is nonnegative.

We may thus assume that $d^0(P) \leq 6$. By Lemma~\ref{l:forb}(i), no
neighbour $P'$ of $P$ in $H^0/\SSS$ has $d^0(P') = 3$. Suppose that
$d^0(P) = 6$. If $P$ is discharged according to rule (D1) or (D2),
then rule (D4) does not apply to $P$, and the transfer from $P$ is at
most $1 + 5 \cdot 1/5 = 2$, the initial charge of $P$. On the other
hand, if none of (D1) and (D2) apply, then $P$ sends at most $6 \cdot
1/3 = 2$ units of charge as well.

If $d^0(v) = 5$, then $P$ has no neighbour in $V_4^\triangle$
(Lemma~\ref{l:forb}(ii)), which rules out the use of (D3) for the
discharging of $P$. Furthermore, the applicability of (D1), (D2) and
(D4) is mutually exclusive. This means that $P$ only sends at most a
charge of 1 unit, which equals its initial charge.

We are left with the case that $d^0(P) \leq 4$. Suppose that $d^0(P) =
4$ (so its initial charge is zero). Lemma~\ref{l:forb} implies that no
neighbour of $P$ in $H^0/\SSS$ is contained in $V_3\cup
V_4^\triangle$. Thus, if $P$ sends any charge at all, it must be
according to rule (D1). In this case, $P\in V_4^\triangle$ and
according to rule (D3), $P$ receives a charge of $1/5$ from each of
the five vertices in $\mnbr P$, so its resulting charge is $-1 + 5
\cdot 1/5 = 0$.

It remains to consider the case that $d^0(P) = 3$. By
Lemma~\ref{l:g0}(ii), $P$ contains a single vertex $v$ of $H^0$. By
the property (Q2) of $\sigma$, $v$ is not a leaf of $\sigma^*$ in
$H^e$, and therefore $P$ is not a leaf of $(\tau^0)^*$ in
$H^0/\SSS$. Let $e$ be an edge of $(\tau^0)^*$ with $P$ as its head, and
let the tail of $e$ be denoted by $t$.

Similarly to the discussion in the preceding cases, $P$ is not
discharged according to any of the rules (D2), (D3) or (D4). We
distinguish two cases.

If $P\notin V_3^\triangle$, then it does not send any charge to its
neighbours or incident hyperedges. By Lemma~\ref{l:forb}, at most one
of the four vertices in $\mnbr P$ has an associated 3-hyperedge $f$
such that the head of $\tau^0(f)$ is $P$. Each of the remaining vertices
sends either $1/3$ or $1$ unit of charge to $P$ (according to rule
(D4) or (D2), respectively), which accounts for a resulting charge of
at least $-1 + 3 \cdot 1/3 = 0$.

If $P\in V_3^\triangle$, then $P$ is discharged according to (D1),
which decreases its charge from $-1$ to $-2$. On the other hand,
Lemma~\ref{l:forb} implies that $e$ is a 2-hyperedge, so $t$ sends one
unit of charge to $P$ according to rule (D2). Furthermore, as above,
$P$ gets at least a charge of $1/3$ from each of the three remaining
vertices in $\mnbr P$. Hence, the new charge is nonnegative
again. This concludes the analysis. The contradiction establishes
Proposition~\ref{p:S4}.


\section{Completing the proof of Theorem~\ref{t:main}}
\label{sec:final}

In this section, we prove Theorem~\ref{t:main}. Before doing so, we
narrow down the set of possible cases by proving that the size of the
partition $\SSS$ is not greater than 2. 

Recall from Section~\ref{sec:counting} the notation $\widetilde m^0_i$
for $\size{E_i(H^0/\SSS)}$ (where $i\in\Setx{2,3}$) and
inequality~\eqref{eq:small}:
\begin{equation*}
  \widetilde m^0_2 + \widetilde m^0_3 \leq 2n-3 + \eps/2,
\end{equation*}
where $n=\size{\SSS}$ and $\eps$ is the difference of the sum of
vertex degrees in $H^0/\SSS$ and in $H^e/\SSS$. Since $\eps\leq 4$
and, by Proposition~\ref{p:S4}, $n \leq 4$, we have
\begin{equation}
  \label{eq:few}
  H^0/\SSS \text{ has at most 7 hyperedges}. 
\end{equation}

Before stating the next proposition, we recall that nontrivial class
of $\SSS$ is a class $X$ such that $\size{X^+} \geq 1$.
\begin{proposition}\label{p:S2}
  If $G^0$ has at least 6 vertices, then $n\leq 2$. Moreover, if $n =
  2$, then the following hold:
  \begin{enumerate}[\quad(i)]
  \item one of the classes of $\SSS$ is a trivial class $\Setx x$,
  \item the degree of $x$ in $H^0$ is 3,
  \item $x$ is incident with the hyperedges $k(e_1)$ and $k(e_2)$ of
    $H^0$,
  \item the size of $k(e_1)$ and $k(e_2)$ is 2.
  \end{enumerate}
\end{proposition}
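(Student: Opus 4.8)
The plan is to push the counting argument from Section~\ref{sec:counting} one step further, now using the sharpened bound~\eqref{eq:few} together with the structural restrictions already obtained. Recall that we are assuming $n = \size\SSS \geq 2$ and, by Proposition~\ref{p:S4}, $n \leq 4$. The first move is to eliminate $n\in\Setx{3,4}$. Since $H^0/\SSS$ is $3$-edge-connected (Observation~\ref{obs:3ec} applied to $H^0/\SSS$, or rather the analogue obtained by contracting classes), every vertex $P\in\SSS$ has $d^0(P)\geq 3$, so $s(H^0/\SSS)\geq 3n$. On the other hand $s(H^0/\SSS) = 2\widetilde m^0_2 + 3\widetilde m^0_3 \leq 3(\widetilde m^0_2 + \widetilde m^0_3)\leq 3(2n-3+\eps/2)$, which for $\eps\leq 4$ gives nothing by itself; the real leverage is that~\eqref{eq:main} forces $\sum_P(d^0(P)-4) \leq m^0_3 - 2$, so the total excess degree over $4n$ is tightly controlled. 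Combined with Lemma~\ref{l:forb} (which for $n\geq 5$ was the engine, but whose proof ingredients — Lemma~\ref{l:path} and Observation~\ref{obs:nontriv} — remain available for $n=3,4$) and Lemma~\ref{l:g0}(ii) (essential $4$-edge-connectedness of $G^0$, hence trivial classes whenever a degree-$3$ vertex of $H^0/\SSS$ appears), I expect a short case analysis on the degree sequence of $H^0/\SSS$ to rule out $n=3$ and $n=4$: the point is that with at most $7$ hyperedges and at least $3$ incident at each of $3$ or $4$ vertices, the degrees are too small to be compatible with $G^0$ having at least $6$ vertices and the forbidden-configuration lemmas.

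Having reduced to $n=2$, write $\SSS = \Setx{X_1,X_2}$. The four assertions (i)--(iv) should then follow from a direct examination of the two classes. For (i): if both $X_1$ and $X_2$ were nontrivial, then by Observation~\ref{obs:nontriv} neither $G^0[X_1^+]$ nor $G^0[X_2^+]$ is a matching, and a degree count on the cut $\bd{G^0}{X_1^+}$ — which has size $d^0(X_1) = d^0(X_2)$ and which, since $\size{E(H^0/\SSS)}\leq 7$ and $H^0/\SSS$ has exactly two vertices, means $d^0(X_1)\leq 7$ — would exhibit an essential edge-cut of $G^0$ of size $\leq 7$ separating two nontrivial sides, hence a non-$2$-essential contradiction with Lemma~\ref{l:g0}(iii) only if the size were $\leq 8$; here one has to be slightly careful and also use that $\eps\leq 4$ tightens the bound when the detached hyperedges $k(e_1),k(e_2)$ straddle the two classes. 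So at least one class, say $X_2 = \Setx{x}$, is trivial. For (ii): $x$ is a permanent vertex of $G^0$, so $d_{H^0}(x) = d_{G^0}(x) \geq 3$; if $d_{H^0}(x)\geq 4$ then, since the only other vertex of $H^0/\SSS$ is $X_1$, all of $x$'s hyperedges go to $X_1$, giving $d^0(X_1) = d^0(x)\geq 4$, and feeding this back into~\eqref{eq:main} (which reads $(d^0(x)-4)+(d^0(X_1)-4)-m^0_3\leq -2$, i.e. $2(d^0(x)-4)\leq m^0_3 - 2$) forces $m^0_3\geq 2$, after which one re-runs the $\eps$-bookkeeping: detaching $k(e_1)$ from $a_1$ and $k(e_2)$ from $a_2$ reduces $s$ by at most $4$, and if $x$ has degree $\geq 4$ one shows the remaining slack is incompatible with $\He$ admitting the skeletal partition unless in fact equality holds throughout, which pins $d_{H^0}(x)=3$. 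Statements (iii) and (iv) then come out of the same equality analysis: $\eps = s(H^0/\SSS) - s(H^e/\SSS)$ must equal exactly $4$ (the maximum), which happens precisely when both detachments remove a full unit of degree from a vertex of $H^e$ inside a class of $\SSS$ and, crucially, both $k(e_1)$ and $k(e_2)$ are incident with $x$ — otherwise the detachment would not decrease $d^0$ and the chain of inequalities from~\eqref{eq:1} through~\eqref{eq:small} would be strict somewhere, contradicting that we are in the tight case $n=2$ with the skeletal partition forced to have size $2$ rather than $1$. Finally, (iv): if $k(e_i)$ were a $3$-hyperedge incident with $x$, then since $d_{H^0}(x)=3$ the detachment would leave $x$ of degree $2$ in $\He$ (within a $2$-hyperedge), and tracing through $G(\He) = \Ge$ and Lemma~\ref{l:join} one checks this forces $x$ to be an internal vertex situation that collapses $\SSS$ to a single class — contradiction — so $\size{k(e_i)} = 2$.

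The hard part will be the bookkeeping around $\eps$ and the detachments: keeping straight, simultaneously, (a) which of $a_1,a_2$ lie in which class of $\SSS$, (b) whether $k(e_1)=k(e_2)$ or not, (c) whether the detachment actually decreases $d^0$ of a class or merely shrinks a hyperedge internal to it, and (d) the interaction with the ``$\size\SSS=1$ is easy'' escape hatch, since several of the degenerate sub-cases are exactly the ones where $\SSS$ would have collapsed to a single class. I would organize this as: first prove $n\leq 2$ via the degree-sequence argument; then, under $n=2$, set up the equality chain $\text{\eqref{eq:1}}+\text{\eqref{eq:2}}$ and observe that $n=2$ (not $1$) together with~\eqref{eq:3} forces the disconnectedness slack to be used up in a specific way; then derive (i)--(iv) in that order, each as a ``equality must hold, hence $\dots$'' deduction. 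The potential pitfall is assuming $a_1\neq a_2$: the construction only guarantees distinctness ``if possible'', so the argument for (iii)--(iv) must also cover $a_1 = a_2 = x$, in which case $\He$ is obtained by two successive detachments at the same vertex $x$ and the degree drop is by $2$ from a single vertex; this actually makes (ii)--(iv) easier rather than harder, but it needs to be said explicitly.
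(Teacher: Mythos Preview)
Your proposal is a strategy sketch rather than a proof, and the main gap is the elimination of $n\in\Setx{3,4}$. You write that you ``expect a short case analysis on the degree sequence of $H^0/\SSS$'' using the ingredients of Lemma~\ref{l:forb}, but you never carry it out, and those ingredients do not reassemble in the way you suggest: the proof of Lemma~\ref{l:forb} genuinely needs at least three classes besides $P$ and $Q$ (to force a matching structure outside $X$ and then find two adjacent low-degree permanent vertices), so for $n\leq 4$ that mechanism is simply unavailable. Inequality~\eqref{eq:main} alone gives no traction either, since with $n=3$ or $4$ the degree sequence $(3,3,\dots)$ satisfies it comfortably once a couple of $3$-hyperedges are present.

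The paper takes a different route. Rather than attacking $n=3,4$ via degrees, it first proves (Claim~1) that $\SSS$ has at most one nontrivial class, then (Claim~2) that a nontrivial class forbids $3$-hyperedges in $H^0/\SSS$, and then (Claim~3) that \emph{some} class is nontrivial --- here the hypothesis $\size{V(G^0)}\geq 6$ enters, forcing $\widetilde m^0_3\geq 2$ if all classes were trivial, which leads to a $2$-essential cut of size $\leq 8$. With one nontrivial class and no $3$-hyperedges in hand, ruling out $n=3,4$ (Claim~4) becomes a short count against the $7$-hyperedge bound~\eqref{eq:few}. Your outline never isolates the nontrivial-class structure, and without it the case analysis you gesture at has no anchor.

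For (i)--(iii) your equality-chain idea is sound and matches the paper: once $n=2$, inequality~\eqref{eq:small} with $d_{H^0}(x)\geq 3$ forces $d_{H^0}(x)=3$ and $\eps=4$, which pins $k(e_1),k(e_2)$ as crossing hyperedges incident with $x$. Your argument for (iv), however, is confused: the claim that a $3$-hyperedge detachment would ``collapse $\SSS$ to a single class'' has no content as stated. The correct reason is that $\eps=4$ requires each detachment to drop $s(H^0/\SSS)$ by exactly $2$, and one checks directly (with only the two classes $P,\Setx{x}$) that detaching a $3$-hyperedge achieves this only when $a_i=x$ and the remaining $2$-hyperedge lies inside $P$ --- tracing the possibilities shows both $k(e_i)$ must in fact be $2$-hyperedges of $H^0$.
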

\begin{proof}
  The proof consists of a series of claims.
  \setcounter{claim}{0}
  \begin{claim}
    The partition $\SSS$ has at most one nontrivial class.
  \end{claim}
  \begin{claimproof}
    By Observation~\ref{obs:nontriv}, if $\SSS$ has two nontrivial
    classes $X,Y$, \eqref{eq:few} implies that $\bd{G^0}{X^+}$ is a
    2-essential edge-cut in $G^0$ of size at most 7.
  \end{claimproof}

  \begin{claim}
    If $\SSS$ has a nontrivial class, then $H^0/\SSS$ contains no
    3-hyperedge.
  \end{claim}
  \begin{claimproof}
    Let $X\in\SSS$ be nontrivial and let $e/\SSS$ be a 3-hyperedge of
    $H^0/\SSS$ (where $e$ is a hyperedge of $H^0$). We can choose two
    vertices of $e/\SSS$ (say $Y_1,Y_2$) distinct from $X$. For
    $i=1,2$, let $y_i$ be the vertex of $e$ in $Y_i$, and let $w$ be
    the vertex of $G^0$ such that $e = h(w)$. Then the edge-cut
    $\bd{G_0}{X^+}$ of size at most 7 separates $X^+$ from the path
    $y_1wy_2$ in $G^0$ and is therefore 2-essential, a contradiction.
  \end{claimproof}

  \begin{claim}
    The partition $\SSS$ has exactly one nontrivial class.
  \end{claim}
  \begin{claimproof}
    For contradiction, suppose that all the classes of $\SSS$ are
    trivial. Recall the parameter $\widetilde m^0_3$, introduced above
    equation~\eqref{eq:main}. Since we assume that $G^0$ has at least
    6 vertices, we must have $\widetilde m^0_3 \geq 2$. We let the
    vertices of $H^0$ be denoted by $x,y,z$ or $x,y,z,u$ depending on
    whether $n$ equals 3 or 4. Furthermore, the 3-hyperedges of $H^0$
    are denoted by $h(w_i)$, where $1\leq i\leq \widetilde m^0_3$ and
    all the $w_i$'s are temporary vertices of $G^0$.
    
    If $H^0$ contains two 3-hyperedges intersecting in exactly two
    vertices (say, $e=xyz$ and $f=xyu$), then the edge-cut
    $\bd{G^0}{\Setx{x,z}^+}$ is a 2-essential edge-cut of size at most
    7, a contradiction. Thus, all the 3-hyperedges of $H^0$ contain the
    same triple of vertices, say $\Setx{x,y,z}$.

    Suppose now that $\widetilde m^0_3 \geq 3$. By~\eqref{eq:few}, we may assume
    that $x$ is incident with only at most three 2-hyperedges of
    $H^0$. But then the size of the 2-essential edge-cut
    $\bd{G^0}{\Setx{x,w_1,w_2}^+}$ in $G^0$ is at most $8$, a
    contradiction.
    
    We conclude that $\widetilde m^0_3 = 2$, which implies $n=4$. If $uz$ is a
    2-hyperedge in $H^0$, then the edge-cut $\bd{G^0}{\Setx{x,w_1,y}^+}$
    in $G^0$ separates the paths $xw_1y$ and $uzw_2$ and is therefore
    2-essential. In addition, its size is at most 8. This contradiction
    concludes the proof of the claim.
  \end{claimproof}

  \begin{claim}\label{cl:two-classes}
    The partition $\SSS$ has at most two classes.
  \end{claim}
  \begin{claimproof}
    Suppose that $n > 2$. By the above claims, $\SSS$ has a single
    nontrivial class $P$ and all the hyperedges of $H^0$ are of size
    2. Let the vertices of $H^0$ comprising the nontrivial parts of
    $\SSS$ be denoted by $x,y$ or $x,y,z$ depending on $n$.

    Suppose first that $n=4$. Since the degree of each of the vertices
    $x,y,z$ in $H^0$ is at least 3, \eqref{eq:few} implies that at least
    two 2-hyperedges have both endvertices in the set
    $\Setx{x,y,z}$. Consequently, the edge-cut $\bd{G^0}{P^+}$ of size
    at most 7 is 2-essential.

    We infer that $n=3$. By a similar argument, the vertices $x$ and $y$
    must be adjacent vertices of degree 3 in $G^0$. Since they are
    permanent and there is no temporary vertex, they must be
    protected. However, if $x$ is adjacent to a transient vertex, then
    it is easy to show that $\bd{G^0}{P^+}$ is a 2-essential 4-edge-cut
    in $G^0$.
  \end{claimproof}
  
  To finish the proof of the proposition, it remains to establish
  properties (ii)--(iv). Let us write $\SSS = \Setx{P,\Setx x}$, where
  $P$ is the nontrivial class. By inequality~\eqref{eq:small} and the
  fact that $G^0$ is 3-edge-connected, $x$ has degree 3 in $H^0$ and
  $\eps=4$. The latter fact means that $k(e_1)$ and $k(e_2)$ are
  2-hyperedges of $H^0$ incident with $x$. The proof is complete.
\end{proof}

Having established Proposition~\ref{p:S2}, we can now prove the main
result of this paper.
\begin{proof}[Proof of Theorem~\ref{t:main}]
  If the graph $G^0$ has at most 5 vertices, then $L(G)$ is
  Hamilton-connected by Lemma~\ref{l:small}. Assume thus that
  $\size{V(G^0)} \geq 6$. By Proposition~\ref{p:S2}, we have $n \leq
  2$.

  If $n=1$, then by the choice of $\SSS$, $\sigma$ is an acyclic
  quasigraph in $\He$ that is both connected and anticonnected on
  $V(\He)$. Proposition~\ref{p:qt-join} implies that $G(\He)$ admits
  an $a_1a_2$-trail spanning $V(\He)$. By Lemma~\ref{l:join}, it
  follows that $G$ admits an internally dominating
  $(e_1,e_2)$-trail. Since the choice of $e_1$ and $e_2$ is arbitrary,
  $L(G)$ is Hamilton-connected by Theorem~\ref{t:ham-conn-preimage}.

  The discussion in the case that $n=2$ is only slightly more
  complicated. By Proposition~\ref{p:S2}, $\SSS$ contains a trivial
  class $\Setx{x}$, $x$ has degree 3 in $H^0$ and is incident in $H^0$
  with the 2-hyperedges $k(e_1)$ and $k(e_2)$. Let $P$ denote the
  other class of $\SSS$ and let $y_1$ and $y_2$ be the endvertex of
  $k(e_1)$ and $k(e_2)$, respectively, in $P$. Furthermore, let $f$ be
  the third 2-hyperedge of $H^0$ incident with $x$, and let $y_3$ be
  its endvertex in $P$. 

  By the definition of the temporary vertices $a_1$ and $a_2$, we may
  assume without loss of generality that $a_1 = x$ and $a_2 = y_2$. To
  find an $xy_2$-trail in $G(\He)$ spanning $V(\He)$,
  we proceed as follows.

  Denoting the hypergraph obtained from $\He$ by removing $x$ by
  $H_1$, we observe that $\sigma$ determines an acyclic quasigraph in
  $H_1$ that is both connected and anticonnected on $V(H_1)=P$. By
  Proposition~\ref{p:qt-join}, $G(H_1)$ admits an $y_3y_2$-trail $T_1$
  spanning $P$. Adding the edge $f$ to the beginning of $T_1$, we
  obtain an $xy_2$-trail in $G(\He)$ spanning $V(\He)$ as desired.
\end{proof}


\section{Claw-free graphs}
\label{sec:claw}

As with Theorem~\ref{t:lai}, Theorem~\ref{t:main} can be extended to
claw-free graphs. The procedure is the same as that used in
\cite[Section 11]{KV-hamilton}. We use the \emph{$M$-closure}
introduced in~\cite{RV-line}, namely the following
result~\cite[Theorem 9]{RV-line}:

\begin{theorem}\label{t:closure}
  If $G$ is a connected claw-free graph, then there is a well-defined
  graph $cl^M(G)$ with the following properties:
  \begin{enumerate}[\quad(i)]
  \item $G$ is a spanning subgraph of $cl^M(G)$,
  \item $cl^M(G)$ is the line graph of a multigraph,
  \item $cl^M(G)$ is Hamilton-connected if and only if $G$ is
    Hamilton-connected.
  \end{enumerate}
\end{theorem}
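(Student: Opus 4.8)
The plan is to derive this from the known theory of the $M$-closure developed in~\cite{RV-line}, rather than re-proving it from scratch; the statement we need is essentially~\cite[Theorem~9]{RV-line}, so the task is to assemble the three listed properties and cite the appropriate lemmas. First I would recall the construction of $cl^M(G)$: starting from a connected claw-free graph $G$, one repeatedly performs \emph{local completions} at so-called \emph{eligible} vertices (vertices whose neighbourhood induces a connected non-complete graph) together with the multigraph-specific modifications introduced in~\cite{RV-line}, until no such operation applies. The key structural fact, proved in~\cite{RV-line}, is that this process is \emph{confluent}: the resulting graph does not depend on the order in which the operations are carried out. This gives the well-definedness asserted in the theorem statement, and property~(i) is immediate since every step only adds edges.

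Next I would address property~(ii). Each local completion preserves the property of being a line graph of a multigraph (this is the content of the main lemmas of~\cite{RV-line}): if the current graph is $L(F)$ for some multigraph $F$, then completing the neighbourhood of an eligible vertex $v$ corresponds to a controlled modification of $F$ around the edge of $F$ represented by $v$ (identifying or contracting certain edges), and the multigraph modifications in the $M$-closure are designed precisely so that the output is again $L(F')$ for a multigraph $F'$. Since a claw-free graph that admits no eligible vertex and no further $M$-move is a line graph of a multigraph by the Beineke/Bertossi-type characterisation used in~\cite{RV-line}, and $G$ itself is claw-free with claw-freeness preserved under the closure operations, the terminal graph $cl^M(G)$ is of the required form.

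Finally, property~(iii) is the heart of the matter and is where I expect the main obstacle to lie — but again it is exactly~\cite[Theorem~9(iii)]{RV-line} and I would simply invoke it. The underlying reason is that a single local completion (or $M$-move) at an eligible vertex $v$ does not change the existence of a Hamilton path between any prescribed pair of vertices: one direction is trivial because edges are only added, and the other direction follows from a rerouting argument showing that any Hamilton path using a newly added edge inside $N(v)$ can be rerouted through $v$ to avoid it, using connectedness of $N(v)$ and claw-freeness. Iterating over the finitely many steps of the closure and using confluence yields that $G$ is Hamilton-connected if and only if $cl^M(G)$ is. The main thing to be careful about is that Hamilton-\emph{connectedness} (all pairs) is preserved, not merely hamiltonicity; this is handled in~\cite{RV-line} by performing the rerouting argument for an arbitrary fixed pair of endvertices, and nothing beyond citing that result is needed here.
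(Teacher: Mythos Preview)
Your approach matches the paper's: the paper does not prove Theorem~\ref{t:closure} at all but simply quotes it as \cite[Theorem~9]{RV-line}, exactly as you propose. The sketch you add of how the proof in~\cite{RV-line} goes is extra detail beyond what the paper provides, but it is consistent with that source and not needed here.
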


In the context of the present paper, the reference to multigraphs in
Theorem~\ref{t:closure} is not necessary, since parallel edges in
graphs are allowed by default.

By condition (i) in Theorem~\ref{t:closure}, the connectivity of
$cl^M(G)$ is greater than or equal to that of $G$. As shown by the
following observation, the closure operation does not decrease the
essential connectivity either.

\begin{lemma}\label{l:ess}
  If $G$ is essentially $k$-connected, then $cl^M(G)$ is also
  essentially $k$-connected.
\end{lemma}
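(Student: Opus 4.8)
The plan is to prove the contrapositive in the natural way: suppose $cl^M(G)$ has an essential vertex-cut $X$ with $\size X < k$, and produce an essential vertex-cut of $G$ of size at most $\size X$, contradicting that $G$ is essentially $k$-connected. Since $G$ is a spanning subgraph of $cl^M(G)$ by Theorem~\ref{t:closure}(i), the same set $X$ is certainly a vertex-cut of $G$ (removing it disconnects at least as much), and $\size X$ has not changed; the only thing that needs checking is that $X$ remains \emph{essential} for $G$, i.e.\ that $G-X$ still has at least two nontrivial components.

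The key point is to understand how components behave under the closure. Let $A$ and $B$ be two nontrivial components of $cl^M(G) - X$. In $G - X$, each of $A$ and $B$ breaks up into one or more components, but every edge of $cl^M(G)$ inside $A$ (resp.\ $B$) that is not an edge of $G$ was added by a local completion step of the $M$-closure, which only adds edges within the closed neighbourhood of a single vertex. So I would argue that for each such added edge $uv$ inside $A$, the vertices $u$ and $v$ lie in $A$ and the two endpoints together with the common neighbour that triggered the completion all lie in $A$ (that common neighbour is not in $X$, since $uv$ with both ends in $A$ cannot have been forced by a vertex of $X$ without $X$ meeting $A$ — more carefully, the completion is at a vertex $w$ with $\{u,v\}\subseteq N(w)$, and $w\notin X$ because $w$ is adjacent in $cl^M(G)$ to $u\in A$, hence $w\in A$). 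Thus removing the added edges of $cl^M(G)$ from $A$ cannot split off an isolated vertex: any vertex of $A$ that had an incident added edge still has, in $G$, an edge to the completion-vertex $w$ in the same piece. Iterating over all completion steps, $G[A]$ is connected, or at worst each of its components is nontrivial; in either case $A$ contains a nontrivial component of $G-X$, and likewise for $B$. Hence $X$ is essential in $G$.

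The main obstacle — and the step to be careful about — is precisely the claim that no completion step can turn a nontrivial piece into something with a trivial (single-vertex) component after passing back to $G$. This hinges on the fact that the $M$-closure adds edges only \emph{within} $N[w]$ for a vertex $w$, so that an added edge $uv$ always comes with a ``witness'' vertex $w$ adjacent (in $G$) to both $u$ and $v$; since $w$ is then adjacent in $cl^M(G)$ to vertices of $A$, it lies in $A$ and outside $X$, and the edges $uw, vw$ belong to $G$. One should also handle the degenerate possibility that a nontrivial component of $cl^M(G)-X$ has only two vertices joined by an added edge, but then those two vertices share the witness $w$ in the same component, giving a path of length $2$ in $G-X$, still nontrivial. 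I would write this up by first recalling the relevant property of $cl^M$ (edges added only inside some $N[w]$), then showing $X$ remains a vertex-cut (immediate from spanning subgraph), then the essentiality argument as above, and finally noting $\size{V(cl^M(G))}=\size{V(G)} > k$ so the size condition in the definition of essential $k$-connectivity is preserved.
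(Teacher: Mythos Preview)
Your argument has a genuine gap at its central step: the claim that the witness vertex $w$ for an added edge $uv$ with $u,v\in A$ must lie in $A$. You write that ``$w\notin X$ because $w$ is adjacent in $cl^M(G)$ to $u\in A$, hence $w\in A$'', but this inference is invalid: vertices of the cut $X$ are typically adjacent to vertices of $A$, so adjacency to $u$ in no way excludes $w\in X$. Concretely, if a local completion is performed at a vertex $x\in X$ that has two neighbours $u,v$ lying in the same component $A$ of $cl^M(G)-X$, then the edge $uv$ is added inside $A$ with witness $w=x\in X$; in such a situation $G[A]$ may be edgeless, and your conclusion that $A$ contains a nontrivial component of $G-X$ fails. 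A related oversight is the assertion that $uw,vw\in E(G)$: the local completion at $w$ is carried out in an intermediate graph between $G$ and $cl^M(G)$, so $u$ and $v$ are neighbours of $w$ there, not necessarily in $G$.

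The paper's proof gets around exactly this obstacle by choosing $X$ to be an inclusion-\emph{minimal} essential cut. It first shows (by looking at the earliest completion step that creates an edge inside the edgeless side $K$) that some completion vertex lies in $X$; minimality then forces that vertex to have a $cl^M(G)$-neighbour in a second component $L$, and a backward induction through the sequence of completions produces a completion vertex in $X$ with a $G$-neighbour in $V(L)$. The completion at that vertex would then merge $K$ and $L$, a contradiction. The use of minimality and the step-by-step tracing through the closure sequence are precisely the ingredients your argument lacks.
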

\begin{proof}
  Clearly, $\size{V(cl^M(G))} > k$. For contradiction, let $X$ be a
  minimal essential vertex-cut in $cl^M(G)$ of size less than
  $k$. Since $X$ is not essential in $G$, there is a component $K$ of
  $cl^M(G)-X$ such that $K$ contains some edges, but $G[V(K)]$ is
  edgeless.

  The operation $cl^M$, as defined in~\cite[Section~4]{RV-line},
  consists of a sequence of local completions at suitable vertices
  $x_1,\dots,x_\ell$. Here, the \emph{local completion} at $x_i$ is
  the addition of all possible edges joining the neighbours of
  $x_i$. Let $e$ be an edge of $K$ and let $Y = X \cap
  \Setx{x_1,\dots,x_\ell}$. In $G$, all the vertices with a neighbour
  in $V(K)$ are contained in $X$. The fact that $K$ contains at least
  one edge implies that $Y\neq\emptyset$. Let us say that $x_i\in Y$,
  where $1\leq i\leq\ell$. By the minimality of $X$, $x_i$ has a
  neighbour (in $cl^M(G)$) in some component $L$ of $cl^M(G)-X$ other
  than $K$. Although the edge between them could be added by a local
  completion at some vertex $x_j$ ($1\leq j\leq\ell$), this can only
  happen if some vertex of $Y$ has a neighbour in $V(L)$ prior to the
  local completion. We conclude that some vertex of $Y$, say $x_k$,
  has a neighbour in $V(L)$ in $G$. But then the local completion at
  $x_k$ adds an edge between a neighbour of $x_k$ in $V(K)$ and a
  neighbour of $x_k$ in $V(L)$, contradicting the assumption that $K$
  and $L$ are different components of $cl^M(G)-X$.
\end{proof}

Using Lemma~\ref{l:ess}, we find that $cl^M(G)$ is a 3-connected,
essentially 9-connected line graph. Thus, $cl^M(G)$ is
Hamilton-connected by Theorem~\ref{t:main}. Condition (iii) of
Theorem~\ref{t:closure} implies that $G$ is Hamilton-connected.


\section{Conclusion}
\label{sec:conclusion}

We have shown that every 3-connected, essentially 9-connected
claw-free graph is Hamilton-connected, and that this assertion is
false with 9 replaced by 4. The obvious question is left unresolved:
what is the least value of $k$ such that 3-connected, essentially
$k$-connected claw-free graphs are Hamilton-connected (or
hamiltonian)? This remains an interesting problem for further
investigation.


\end{document}